\documentclass[reqno]{amsart}
\usepackage{graphicx} 
\usepackage[utf8]{inputenc}
\usepackage{amsmath}

\usepackage{tikz}

\usetikzlibrary{shapes.geometric}
\usepackage{amsthm}

\usepackage{amsfonts}
\usepackage{caption}
\usepackage{amssymb}
\usepackage{verbatim}
\usepackage{mathtools}
\usepackage{subcaption}
\usepackage{changepage}
\newtheorem{theorem}{Theorem}
\newtheorem{lemma}[theorem]{Lemma}

\newtheorem{corollary}[theorem]{Corollary}
\newtheorem{definition}[theorem]{Definition}
\numberwithin{equation}{section}

\title{Values of Ducci Periods for Sequences on $\mathbb{Z}_m^n$}

\author{Mark L. Lewis }
\address{Department of Mathematical Sciences\\
Kent State University\\
Kent, OH 44242}
\email{lewis@math.kent.edu}
\author{Shannon M. Tefft}
\address{Department of Mathematical Sciences\\
Kent State University\\
Kent, OH 44242}
\email{stefft@kent.edu}

\subjclass{20D60, 11B83, 11B50}
\keywords{Ducci sequence, modular arithmetic, length, period, $n$-Number Game}

\date{February 2025}
\begin{document}
\begin{abstract}
Let $D: \mathbb{Z}_m^n \to \mathbb{Z}_m^n$ be defined so that 
\[D(x_1, x_2, ..., x_n)=(x_1+x_2 \; \text{mod} \; m, x_2+x_3 \; \text{mod} \; m, ..., x_n+x_1 \; \text{mod} \; m).\]
We call $D$ the Ducci function and the sequence $\{D^{\alpha}(\mathbf{u})\}_{\alpha=0}^{\infty}$ the Ducci sequence of $\mathbf{u}$ for $\mathbf{u} \in \mathbb{Z}_m^n$. Every Ducci sequence enters a cycle, so we can let $\text{Per}(\mathbf{u})$ be the number of tuples in the Ducci cycle of $\mathbf{u}$, or the period of $\mathbf{u}$. In this paper, we will look at what different possible values of $\text{Per}(\mathbf{u})$ we can have and some conditions that if $\mathbf{u}$ meets at least one of them, $\mathbf{u}$ will generate a period smaller than the maximum period. 
\end{abstract}
\maketitle
\section{Introduction}\label{intro}
\indent We begin by introducing an endomorphism on $\mathbb{Z}_m^n$ known as the Ducci function, $D$, with \cite{Breuer1, Ehrlich, Glaser} being a few examples that call it this. Namely, we define $D$ so that
\[D(x_1, x_2, ..., x_n)=(x_1+x_2 \; \text{mod} \; m, x_2+x_3 \; \text{mod} \; m, ..., x_n+x_1 \; \text{mod} \; m).\]
We also call the sequence $\{D^{\alpha}(\mathbf{u})\}_{\alpha=0}^{\infty}$ the \textbf{Ducci sequence of} $\mathbf{u}$ for $\mathbf{u} \in \mathbb{Z}_m^n$. 

\indent As \cite{Breuer1} notes, every Ducci sequence on $\mathbb{Z}_m^n$ enters a cycle because $\mathbb{Z}_m^n$ is finite. More specifically, we define
\begin{definition}
The \textbf{Ducci cycle of} $\mathbf{u}$ is  
\[\{\mathbf{v} \mid \exists \alpha \in \mathbb{Z}^+ \cup \{0\}, \beta \in \mathbb{Z}^+  \ni \mathbf{v}=D^{\alpha+\beta}(\mathbf{u})=D^{\alpha}(\mathbf{u})\}\].
The \textbf{length of} $\mathbf{u}$, $\mathbf{Len(u)}$, is the smallest $\alpha$ satisfying the equation 
\[\mathbf{v}=D^{\alpha+\beta}(\mathbf{u})=D^{\alpha}(\mathbf{u})\]
 for some $v \in \mathbb{Z}_m^n$ \and the \textbf{period of} $\mathbf{u}$, $\mathbf{Per(u)}$, is the smallest $\beta$ that satisfies the equation. 
\end{definition}

\indent To demonstrate this definition, we look at the Ducci sequence of $(0,0,2) \in \mathbb{Z}_4^3$: $(0,0,2), (0,2,2), (2,0,2), (2,2,0), (0,2,2)$. Here we see that $\text{Len}(0,0,2)=1$ and $\text{Per}(0,0,2)=3$. 

\indent A particular Ducci sequence of interest is the Ducci sequence of $(0,0,...,0,1)$ in $\mathbb{Z}_m^n$, known as the \textbf{basic Ducci sequence}. The basic Ducci sequence is first introduced by \cite{Ehrlich} on page 302 and is also used by  \cite{Breuer1, Dular,Glaser}. 
Define 
\[\mathbf{P_m(n)}=\text{Per}(0,0,...,0,1)\]
 and
  \[\mathbf{L_m(n)}=\text{Len}(0,0,...,0,1).\] This notation is similar to the notation that \cite{Breuer1} uses in Definition 5, and $P_m(n)$ is also similar to the notation used by \cite{Ehrlich, Glaser}. 
This sequence is important is because of Lemma 1 in \cite{Breuer1}, which says that for $\mathbf{u} \in \mathbb{Z}_m^n$, $\text{Len}(\mathbf{u}) \leq L_m(n)$ and $\text{Per}(\mathbf{u})|P_m(n)$. This provides maximum values for the length and periods of tuples in $\mathbb{Z}_m^n$. The notation $P_m(n)$ for the maximum period on $\mathbb{Z}_m^n$ is also used by \cite{Breuer2} on page 858.

\indent Discussing a value for a maximum period also begs the question about when tuples do or do not generate this period. In this paper, we would like to focus on some different cases where this happens. We will begin by examining a smaller period that appears when $n=3$ by proving the following Lemma:

\begin{lemma} \label{TuplesLength6}
Suppose $\mathbf{u}=(x_1, x_2, x_3) \in \mathbb{Z}_m^3$ such that $x_1+x_2+x_3\equiv 0 \; \text{mod} \; m$. Then 
\begin{enumerate}
    \item Every tuple in the Ducci sequence of $\mathbf{u}, \; \mathbf{v}=(x_1',x_2',x_3')$, satisfies 
    \[x_1'+x_2'+x_3'\equiv 0 \; \text{mod} \;m.\]
    \item $D^2(\mathbf{u})=H(\mathbf{u})$.
    \item $\mathbf{u}$ belongs to a Ducci cycle.
    \item If $m$ odd, then unless $x_1=x_2=x_3$,  $Per(\mathbf{u})=6$.
    \item If $m$ is even and the $x_i$ are not all equal, then either $\text{Per}(\mathbf{u})=6$, or $\text{Per}(\mathbf{u})=3$ and $x_1, x_2, x_3 \in \{0, \frac{m}{2}\}$.
\end{enumerate}
\end{lemma}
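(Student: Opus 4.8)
The plan is to prove the five parts in order, since each relies on the one before it. For part (1), I would compute the coordinate sum of $D(x_1,x_2,x_3)$, namely $(x_1+x_2)+(x_2+x_3)+(x_3+x_1)=2(x_1+x_2+x_3)$; thus the sum is doubled at each application of $D$, so starting from $x_1+x_2+x_3\equiv 0\bmod m$ it remains $\equiv 0\bmod m$ for every term of the Ducci sequence by a routine induction. For part (2), the key computation is $D^2(x_1,x_2,x_3)=(x_1+2x_2+x_3,\,x_1+x_2+2x_3,\,2x_1+x_2+x_3)$; substituting the sum condition collapses each entry, e.g.\ $x_1+2x_2+x_3=(x_1+x_2+x_3)+x_2\equiv x_2$, so that $D^2(\mathbf{u})=(x_2,x_3,x_1)$, which is exactly the cyclic shift $H(\mathbf{u})$.

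Part (3) then follows almost immediately: because part (1) guarantees every iterate still has zero coordinate sum, the identity $D^2=H$ is valid all along the orbit, so $D^{2k}(\mathbf{u})=H^k(\mathbf{u})$ and in particular $D^6(\mathbf{u})=H^3(\mathbf{u})=\mathbf{u}$ since $H$ has order $3$. Hence $\text{Len}(\mathbf{u})=0$, so $\mathbf{u}$ lies on a Ducci cycle, and $\text{Per}(\mathbf{u})\mid 6$.

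For parts (4) and (5), I would determine which divisors of $6$ can occur. Computing $D^3(\mathbf{u})=D(H(\mathbf{u}))=(x_2+x_3,\,x_3+x_1,\,x_1+x_2)\equiv(-x_1,-x_2,-x_3)$ via the sum condition shows that $D^3$ acts as negation on the zero-sum set. Using that $\text{Per}(\mathbf{u})\mid d$ iff $D^d(\mathbf{u})=\mathbf{u}$, I get: $\text{Per}(\mathbf{u})\mid 2$ iff $H(\mathbf{u})=\mathbf{u}$ iff $x_1=x_2=x_3$; and $\text{Per}(\mathbf{u})\mid 3$ iff $-\mathbf{u}=\mathbf{u}$ iff $2x_i\equiv 0\bmod m$ for each $i$. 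Excluding the all-equal case removes periods $1$ and $2$, leaving $3$ or $6$. When $m$ is odd, $2x_i\equiv 0$ forces $x_i\equiv 0$, i.e.\ all equal, so period $3$ cannot happen and the period is $6$, giving part (4). When $m$ is even, $2x_i\equiv 0$ holds exactly when $x_i\in\{0,\frac{m}{2}\}$; combining this with $x_1+x_2+x_3\equiv 0\bmod m$ forces precisely two entries to equal $\frac{m}{2}$ and the third to be $0$, a non-all-equal tuple whose period divides $3$ but is not $1$, hence equals $3$, while every other non-all-equal tuple has period $6$, giving part (5).

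The main obstacle, though a mild one, is the divisor bookkeeping in the last step: I must carefully distinguish ``period divides $d$'' from ``period equals $d$'', confirm that excluding the all-equal configuration genuinely eliminates periods $1$ and $2$, and verify that the only zero-sum tuples with every $x_i\in\{0,\frac{m}{2}\}$ are the two-copies-of-$\frac{m}{2}$ ones claimed. Once the two clean identities $D^2=H$ and $D^3=-\mathrm{id}$ on the zero-sum subset are in hand, the rest is a finite case check over the divisors of $6$.
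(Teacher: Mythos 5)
Your proof is correct and follows essentially the same route as the paper's: the coordinate sum doubles under $D$, $D^2=H$ on the zero-sum set, hence $D^6(\mathbf{u})=\mathbf{u}$, and then a case check over the divisors of $6$ in which period $1$ or $2$ forces $x_1=x_2=x_3$ and period $3$ forces $2x_i\equiv 0 \; \text{mod} \; m$. Your packaging of the period-$3$ case as ``$D^3=-\mathrm{id}$ on the zero-sum set'' is a slightly cleaner formulation of the paper's three congruences $x_2+x_3\equiv x_1$, $x_3+x_1\equiv x_2$, $x_1+x_2\equiv x_3$, and your explicit enumeration showing that zero-sum, not-all-equal tuples with entries in $\{0,\frac{m}{2}\}$ are exactly those with two entries equal to $\frac{m}{2}$ is more detailed than the paper's appeal to its transition-graph figure, but the substance is identical.
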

where $H$ is an endomorphism on $\mathbb{Z}_m^n$ defined so 
\[H(x_1, x_2, ..., x_n)=(x_2, x_3, ..., x_n, x_1).\]
The function $H$ is first defined by \cite{Ehrlich} on page 302, and is also used by \cite{Breuer1, Glaser}.

\indent After this, we would like to narrow our focus to when $m$ is prime to show that $\text{Per}(\mathbf{u})=P_m(n)$ for most $\mathbf{u} \in \mathbb{Z}_m^3$:
\begin{theorem}\label{BigThm_prime}
Let $n=3$, $m$ an odd prime, and $\mathbf{u}=(x_1, x_2, x_3) \in \mathbb{Z}_m^3$. If 
\[x_1+x_2+x_3 \not \equiv 0 \; \text{mod} \; m\]
and $x_1, x_2, x_3$ not all equal, then $Per(\mathbf{u})=P_m(3)$.
\end{theorem}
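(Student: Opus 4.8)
The plan is to use the fact that, since $m$ is odd, the Ducci map $D$ is a linear bijection of $\mathbb{Z}_m^3$: its matrix has determinant $2$, which is invertible mod $m$. Consequently every Ducci sequence is purely periodic, and $\text{Per}(\mathbf u)$ is simply the least $\beta \ge 1$ with $D^\beta(\mathbf u) = \mathbf u$. Working over the field $\mathbb{Z}_m$, I would split $\mathbb{Z}_m^3$ into $D$-invariant subspaces. The line $L = \langle (1,1,1)\rangle$ is the eigenline for the eigenvalue $2$ (indeed $D$ doubles the coordinate sum, $x_1+x_2+x_3 \mapsto 2(x_1+x_2+x_3)$), while the plane $W = \{(x_1,x_2,x_3): x_1+x_2+x_3 \equiv 0\}$ is $D$-invariant by part (1) of Lemma \ref{TuplesLength6}. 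As long as $m \neq 3$ we have $(1,1,1) \notin W$, so $\mathbb{Z}_m^3 = L \oplus W$ as $D$-modules.

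Given $\mathbf u$ with $x_1+x_2+x_3 \not\equiv 0$ and the $x_i$ not all equal, I would write $\mathbf u = c(1,1,1) + \mathbf w$ with $c(1,1,1) \in L$ and $\mathbf w \in W$ (solving $3c \equiv x_1+x_2+x_3$, which is possible since $m \neq 3$). The two hypotheses translate exactly into $c \neq 0$ (the sum equals $3c$) and $\mathbf w \neq 0$ ($\mathbf u$ is not a multiple of $(1,1,1)$). Because $L \oplus W$ is a direct sum of invariant subspaces, $D^\beta(\mathbf u) = \mathbf u$ holds if and only if $D^\beta$ fixes each component, whence $\text{Per}(\mathbf u) = \operatorname{lcm}\bigl(\text{Per}(c(1,1,1)),\text{Per}(\mathbf w)\bigr)$. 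On $L$ the map $D$ acts as multiplication by $2$, so the first period is the multiplicative order $\operatorname{ord}_m(2)$ of $2$ modulo $m$ (here $c\neq 0$ is used). For the second, a nonzero $\mathbf w \in W$ cannot be of the form $(t,t,t)$ when $m \neq 3$, so part (4) of Lemma \ref{TuplesLength6} yields $\text{Per}(\mathbf w) = 6$. Hence $\text{Per}(\mathbf u) = \operatorname{lcm}(\operatorname{ord}_m(2), 6)$, a value independent of the particular $\mathbf u$.

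The decisive final observation is that the basic tuple $(0,0,1)$ itself satisfies the hypotheses: its sum is $1 \not\equiv 0$ and its entries are not all equal. The computation above therefore applies to it, giving $P_m(3) = \text{Per}(0,0,1) = \operatorname{lcm}(\operatorname{ord}_m(2), 6)$. Combining this with the previous paragraph, every qualifying $\mathbf u$ satisfies $\text{Per}(\mathbf u) = P_m(3)$, and I never need a closed form for $P_m(3)$.

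The one case requiring extra care, and where I expect the main obstacle, is $m = 3$, since then $(1,1,1) \in W$ and the splitting $L \oplus W$ collapses (here $D$ is not semisimple; its characteristic polynomial is $(\lambda-2)^3$). I would dispatch this by coordinate-sum bookkeeping together with a short direct check. After $\beta$ steps the sum is multiplied by $2^\beta$, so $D^\beta(\mathbf u) = \mathbf u$ with nonzero sum forces $2^\beta \equiv 1 \pmod 3$, i.e.\ $\beta$ even, ruling out periods $1$ and $3$. To exclude period $2$, I would use $D^2 = H + J$ (with $J$ the all-ones matrix), which comes from part (2) of Lemma \ref{TuplesLength6}; solving $D^2(\mathbf u) = \mathbf u$ shows any such fixed tuple has sum $\equiv 0$, contradicting the hypothesis. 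Since all periods divide $P_3(3) = 6$, only $\text{Per}(\mathbf u) = 6 = P_3(3)$ survives. (For $m = 3$, note that ``not all equal'' is automatic once the sum is nonzero, since every constant tuple has sum $\equiv 0$.) The overall difficulty is thus organizing the invariant-subspace decomposition and cleanly isolating the degenerate $m = 3$ case where that decomposition is unavailable.
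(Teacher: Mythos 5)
Your proposal is correct, but it takes a genuinely different route from the paper's. The paper argues by contradiction inside the $a_{r,s}$-coefficient framework: assuming a qualifying tuple has period $d<P_m(3)$, it sums the three circulant congruences for $D^d(\mathbf{u})=\mathbf{u}$ to get $a_d+b_d+c_d\equiv 1 \; \text{mod} \; m$, then runs a case analysis on $d \; \text{mod} \; 6$ (via Lemma \ref{n=3_coeff_relat} and $6\mid P_m(3)$ from Lemma \ref{6_divides_Period}), splitting further into the subcases $P_m(3)=2d,3d,6d$ and deriving a contradiction with the sum or uniformity hypothesis in each. You instead use the $D$-invariant splitting $\mathbb{Z}_m^3=L\oplus W$ for $m>3$, where $L=\langle(1,1,1)\rangle$ carries eigenvalue $2$ and $W$ is the sum-zero plane; you compute the period on each summand ($\operatorname{ord}_m(2)$ on $L$, and $6$ on $W$ via Lemma \ref{TuplesLength6}(4)), note that the resulting $\operatorname{lcm}$ is the same for every qualifying tuple, and then close with the key observation the paper never makes explicit: $(0,0,1)$ itself satisfies the hypotheses, so this common value \emph{is} $P_m(3)$. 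Your route eliminates the case analysis entirely, yields the closed form $P_m(3)=\operatorname{lcm}\bigl(6,\operatorname{ord}_m(2)\bigr)$ for prime $m>3$ as a byproduct, and replaces the paper's enumeration of all $27$ tuples for $m=3$ with a short parity-plus-fixed-point argument (your claim that every solution of $D^2(\mathbf{u})=\mathbf{u}$ over $\mathbb{Z}_3$ is constant, hence has zero sum, checks out: $D^2-I=S(2I+S)$ and $S$ is invertible, forcing $S\mathbf{u}=\mathbf{u}$). What the paper's heavier approach buys is uniformity with the rest of the article: the $a_{r,s}$ machinery and the mod-$6$ relations are exactly the tools reused in the later sections for general $n$, where no eigenline/hyperplane decomposition of this kind is available. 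Both arguments ultimately rest on the same two inputs, pure periodicity ($L_m(3)=0$, which you rederive self-containedly from $\det(I+S)=2$ being a unit mod odd $m$) and Lemma \ref{TuplesLength6}.
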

\indent We will then take a look at some specific cases where $n,m$ are both prime but $n$ may not be $3$ and what periods are possible for these cases, as well as how we were able to determine those periods. 
Finally, we would like to look at a specific case where $n=m$ is prime, and determine all of the possible periods for a tuple $\mathbf{u} \in \mathbb{Z}_m^n$ for this case:
\begin{theorem}\label{BigThm_n_not_3}
    Let $n=m=p$ be an odd prime and let $\delta$ be the order of $2$ mod $p$. Then there are exactly 3 distinct period lengths for $\mathbf{u} \in \mathbb{Z}_p^p$:
    \begin{enumerate}
        \item $Per(\mathbf{u})=1$, which can only happen when $\mathbf{u}=(0,0,...,0)$.\\
        \item $Per(\mathbf{u})=\delta$, which can only happen if $\mathbf{u}=(x,x,...,x)$ for some $x \in \mathbb{Z}_p$.\\
        \item $Per(\mathbf{u})=P_p(p)=p*\delta$.\\
        \end{enumerate}
\end{theorem}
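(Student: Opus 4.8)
The plan is to recast the entire problem inside the group ring $R=\mathbb{Z}_p[X]/(X^p-1)$. Identifying the tuple $(x_1,\dots,x_p)$ with the element $\sum_{i=1}^{p} x_i X^{i-1}$, a direct computation shows that $D$ acts as multiplication by the fixed element $g=1+X^{-1}$, so that $D^{\beta}(\mathbf{u})=g^{\beta}\mathbf{u}$ for all $\beta$. Because $p$ is prime we have the characteristic-$p$ factorization $X^p-1=(X-1)^p$, so the substitution $Y=X-1$ gives a ring isomorphism $R\cong \mathbb{Z}_p[Y]/(Y^p)$, a local principal ideal ring whose only ideals form the chain $(Y^k)$ for $0\le k\le p$. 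The two structural facts I would record at the outset are that $g$ is a unit (its image in the residue field $\mathbb{Z}_p$ is $2\neq 0$) and that $g-2$ has $Y$-adic valuation exactly $1$.

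First I would use that $g$ is a unit to conclude that $D$ is a bijection on $R$, so every $\mathbf{u}$ lies directly in a cycle and $\mathrm{Per}(\mathbf{u})$ is simply the least $\beta>0$ with $(g^{\beta}-1)\mathbf{u}=0$. Writing a nonzero $\mathbf{u}$ as $Y^{j}w$ with $w$ a unit and $j=v(\mathbf{u})$ its valuation, the chain structure gives $\mathrm{Ann}(\mathbf{u})=(Y^{p-j})$, so $(g^{\beta}-1)\mathbf{u}=0$ holds iff $g^{\beta}\equiv 1 \pmod{Y^{p-j}}$. Hence $\mathrm{Per}(\mathbf{u})$ depends only on $j$ and equals the multiplicative order of $g$ in the quotient $R_k:=\mathbb{Z}_p[Y]/(Y^{k})$ with $k=p-j$.

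Next I would compute these orders. Factoring $g=2\cdot(1+h)$ with $h\in (Y)$ of valuation $1$, the scalar $2$ has order $\delta$ inside $\mathbb{Z}_p^{\times}\subseteq R_k^{\times}$, while the Freshman's dream gives $(1+h)^{p}=1+h^{p}=1$ in every $R_k$ with $k\le p$ (as $v(h^p)=p\ge k$); since $h\neq 0$ in $R_k$ exactly when $k\ge 2$, the factor $1+h$ has order $p$ for $k\ge 2$ and order $1$ for $k=1$. Because $\gcd(\delta,p)=1$, I conclude $\mathrm{ord}_{R_1}(g)=\delta$ and $\mathrm{ord}_{R_k}(g)=\delta p$ for $2\le k\le p$. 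Taking $k=p$, i.e. $\mathbf{u}$ a unit such as the basic tuple $X^{p-1}=(0,\dots,0,1)$, identifies $P_p(p)=\delta p$, confirming the value in case (3).

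Finally I would translate back to tuples. The period is $1$ exactly for $\mathbf{u}=0$; it equals $\delta$ exactly when $k=1$, i.e. $j=p-1$, i.e. $\mathbf{u}\in (Y^{p-1})\setminus\{0\}$; and it equals $\delta p=P_p(p)$ for every other nonzero tuple. To finish I must recognize the ideal $(Y^{p-1})$: since $1+X+\cdots+X^{p-1}=(X-1)^{p}/(X-1)=Y^{p-1}$, this one-dimensional ideal is precisely the set of constant tuples $(x,x,\dots,x)$, matching case (2). I expect the main obstacle to be setting up this dictionary cleanly, namely verifying that $D$ is multiplication by the unit $g$, that $R$ is a chain ring so that annihilators are exactly $(Y^{p-j})$, and that valuation $p-1$ corresponds to constant tuples; once that is in place, the order computation via the Freshman's dream is routine.
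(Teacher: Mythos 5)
Your proposal is correct, and it takes a genuinely different route from the paper's proof. The paper stays within its $a_{r,s}$-coefficient machinery: it imports $L_p(p)=0$ from prior work, derives $D^p=2\cdot\mathrm{id}$ from $a_{p,s}=\binom{p}{s-1}\equiv 0 \bmod p$ ($s\neq 1$) to get $P_p(p)=p\delta$, rules out periods that are multiples of $p$ smaller than $p\delta$, and then reduces everything to showing that $D^{p-1}(\mathbf{u})=\mathbf{u}$ forces $\mathbf{u}$ to be constant, which it proves by an explicit linear system with coefficient pattern $(-1)^{s-1}$ together with the determinant computation of Lemma \ref{j_by_j_matrix}. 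You instead encode $D$ as multiplication by the unit $g=1+X^{-1}$ in $\mathbb{Z}_p[X]/(X^p-1)\cong\mathbb{Z}_p[Y]/(Y^p)$ and read off periods as orders of $g$ in the quotients $\mathbb{Z}_p[Y]/(Y^k)$. This buys several things at once: bijectivity of $D$ (so the external input $L_p(p)=0$ is not needed), the value $P_p(p)=p\delta$, and a finer classification --- $\mathrm{Per}(\mathbf{u})$ depends only on the $Y$-adic valuation $j$ of $\mathbf{u}$, equals $\delta$ only at the extreme $j=p-1$ (exactly the constant tuples, via $1+X+\cdots+X^{p-1}=(X-1)^{p-1}$), and is already maximal for every $j\le p-2$. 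The paper's determinant lemma becomes unnecessary, replaced by the Freshman's-dream computation $(1+h)^p=1+h^p=1$ in $\mathbb{Z}_p[Y]/(Y^k)$ and the coprimality of $\delta$ and $p$; the paper's argument, by contrast, is elementary linear algebra over $\mathbb{Z}_p$ and recycles tools used throughout the rest of the paper. The only point worth adding to your write-up is the trivial observation that $\delta>1$ (since $2\not\equiv 1 \bmod p$ for $p$ an odd prime), so that your three period values $1$, $\delta$, $p\delta$ really are pairwise distinct, as the theorem's ``exactly $3$'' requires.
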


\indent The work in this paper was done while the second author was a Ph.D. student at Kent State University under the advisement of the first author and will appear as part of the second author's dissertation. 

\section{Background}\label{Background}

The Ducci function on $\mathbb{Z}_m^n$ is first examined by \cite{Wong}, as well as \cite{Breuer1, Breuer2, Dular}. However, this endomorphism is based on another function, $\bar{D}$, where
 \[\bar{D}(x_1, x_2,..., x_n)=(|x_1-x_2|, |x_2-x_3|, ..., |x_n-x_1|)\]
 and either $\bar{D}: \mathbb{Z}^n \to \mathbb{Z}^n $ or $\bar{D}: (\mathbb{Z}^+ \cup \{0\})^n \to (\mathbb{Z}^+ \cup \{0\})^n$.
 A few sources that define Ducci in this way are \cite{Ehrlich, Freedman, Glaser, Furno}. Other papers, like \cite{Brown, Chamberland, Schinzel}, look at Ducci defined on $\mathbb{R}^n$, where we once more use the formula given for $\bar{D}$. 
 
 \indent For the sake of simplicity, we will refer to both Ducci on $\bar{D}:\mathbb{Z}^n \to \mathbb{Z}^n$ and $\bar{D}: (\mathbb{Z}^+ \cup \{0\})^n \to (\mathbb{Z}^+ \cup \{0\})^n$ as Ducci on $\mathbb{Z}^n$. This works because if $\mathbf{u} \in \mathbb{Z}^n$, then $\bar{D}(\mathbf{u}) \in (\mathbb{Z}^+ \cup \{0\})^n$. 
 
 \indent There are a few findings from Ducci on $\mathbb{Z}^n$ that we would like to note before moving on. First, every Ducci sequence on $\mathbb{Z}^n$ enters a cycle. This is discussed in \cite{Burmester, Ehrlich, Glaser, Furno}. Additionally, we have that all of the tuples belonging to a Ducci sequence satisfy the condition that all of their entries belong to $\{0,c\}$ for some $c \in \mathbb{Z}$, which is proved in Lemma 3 of \cite{Furno}. 
 
 
 \indent For the Ducci case on $\mathbb{Z}^n$, this makes the Ducci case on $\mathbb{Z}_2^n$ important because $D(\lambda \mathbf{u})=\lambda D(\mathbf{u})$, where we use the same definition as Ducci on $\mathbb{Z}_m^n$.
 
 \indent We now return to Ducci on $\mathbb{Z}_m^n$ and our original definition for $D$. We begin by presenting a few more definitions that we would like to use throughout the paper. If $\mathbf{u}, \mathbf{v} \in \mathbb{Z}_m^n$, then $\mathbf{v}$ is called a \textbf{predecessor} of $\mathbf{u}$ if $D(\mathbf{v})=\mathbf{u}$.
 
 \indent Let $K(\mathbb{Z}_m^n)$ be the set of all tuples in $\mathbb{Z}_m^n$ that belong to a Ducci cycle for some $\mathbf{u} \in \mathbb{Z}_m^n$. This definition is first introduced in Definition 4 of \cite{Breuer1}. This Definition also states that $K(\mathbb{Z}_m^n)$ is a subgroup of $\mathbb{Z}_m^n$, and a proof is in Theorem 1 of \cite{Paper1}. 
 
 \indent There are a couple of papers that have looked at the value of $P_m(n)$. 
 One of these papers is \cite{Breuer1}, which in Proposition 4 shows that if $p$ is prime, then $P_p(p^kn_1)=p^kP_p(n_1)$.
 Theorem 8.2 of \cite{Breuer2}, shows that $P_m(n)$ can be broken down in terms of $P_p(n)$ where $m=p^l$ for some prime $p$ and $l \geq 1$. As an example of one of these break downs, Corollary 8.3 isolates to the case where $p \nmid 2n$, and $p^2 \nmid (2^{p-1}-1)$ to say $P_{p^l}(n)=p^{l-1}P_p(n)$. Theorem 3.2 of \cite{Dular} provides more significance to this result, by proving that if $m=m_1m_2$ where $gcf(m_1, m_2)=1$, then $P_m(n)=lcm\{P_{m_1}(n), P_{m_2}(n)\}$, which allows us to narrow our examination of the value of $P_m(n)$ to when $m$ is a prime power, and therefore to the formula that \cite{Breuer2} provides in Theorem 8.2. 
 
 \indent However, in this paper, we are not as interested in the actual value of $P_m(n)$. Rather, we would like to look at 
 the set $\{\text{Per}(\mathbf{u}) \; | \; \mathbf{u} \in \mathbb{Z}_m^n\}$ as a whole. How large is this set? For each value of $\text{Per}(\mathbf{u})$ in this set, how many different tuples generate that period? Can we find conditions $\mathbf{u}$ must meet to generate that period?

 \indent The one value of $\text{Per}(\mathbf{u})$ that is always a period for some tuple in $\mathbb{Z}_m^n$ for every $n,m$ is $\text{Per}(0,0,...,0)=1$. We define $\vec{\mathbf{0}}=(0,0,...,0)$. From Remark 2 of \cite{Breuer1}, we know that the only time $\text{Per}(\mathbf{u})=1$ is when the Ducci cycle of $\mathbf{u}$ is $\{\vec{\mathbf{0}}\}$. If the Ducci cycle of $\mathbf{u}$ is $\{\vec{\mathbf{0}}\}$, then we say that the Ducci sequence of $\mathbf{u}$ \textbf{vanishes}, a term that is first used on page 117 by \cite{Burmester} and also by \cite{Breuer1, Freedman}. 
 
 \indent One of the most well-known facts about Ducci on $\mathbb{Z}^n$ is that all Ducci sequences vanish if $n$ is a power of $2$. The first paper to prove this is \cite{Ciamberlini}, as noted by \cite{Brown1, Chamberland}. The papers \cite{Freedman, Furno} also credit \cite{Ciamberlini} for a proof of this. Unfortunately, we are unable to find a copy of \cite{Ciamberlini} to confirm this ourselves. A review of the paper can be found at \cite{CiamberliniReview}. Nonetheless, many other papers have proofs of this fact as well, including \cite{Ehrlich, Freedman, Miller, Pompili}. 
 
 \indent This can be extended to Ducci on $\mathbb{Z}_m^n$. If $n=2^k$ and $m=2^l$, then \cite{Wong} proves in (I) on page 103 that all tuples in $\mathbb{Z}_m^n$ vanish. This is proved again by \cite{Dular}. In Theorem 2 of \cite{Paper1}, we prove this once more, plus that $L_m(n)=2^{k-1}(l+1)$. Therefore, in this case, $\{\text{Per}(\mathbf{u}) \; | \; \mathbf{u} \in \mathbb{Z}_m^n\}=\{1\}$.
 
 \indent For another possible period, we look at the tuples where all of the entries are the same, nonzero value. We say a tuple satisfies the uniformity condition if all of its entries are nonzero and equal. We now prove a lemma about tuples that meet this condition:
 \begin{lemma}\label{uniformcondlemma}
 Suppose $\mathbf{u}=(x,x, ..., x)$ for some $ 0 < x <m$. Assume $m=2^lm_1>1$ where $m_1 \geq 1$ is odd and $l \geq 0$. 
\begin{enumerate} 
\item Assume $m_1=1$. Then $\mathbf{u}$ vanishes for every $x \in \mathbb{Z}_m$.\\
 \item Assume $m_1>1$. Let $\delta$ be the multiplicative order of $2 \; \text{mod} \; m_1$   then $\text{Per}(\mathbf{u})=\delta$ when $m_1 \nmid x$ and $\mathbf{u}$ vanishes when $m_1|x$.
 \end{enumerate}
 \end{lemma}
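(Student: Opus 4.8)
The plan is to reduce the entire lemma to the dynamics of the doubling map on $\mathbb{Z}_m$. Since $\mathbf{u}=(x,x,\dots,x)$ is constant, $D(\mathbf{u})=(2x,2x,\dots,2x)\bmod m$, and inductively $D^{k}(\mathbf{u})=(2^{k}x\bmod m,\dots,2^{k}x\bmod m)$. Hence the whole Ducci sequence is constant-valued, and $D^{a}(\mathbf{u})=D^{b}(\mathbf{u})$ if and only if $2^{a}x\equiv 2^{b}x\pmod m$. Thus $\text{Per}(\mathbf{u})$ is exactly the length of the cycle of $x$ under the map $\mu\colon y\mapsto 2y$ on $\mathbb{Z}_m$, and $\text{Len}(\mathbf{u})$ is the length of its tail. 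All the claims will follow by analysing $\mu$.

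For the vanishing statements I would argue directly. If $m=2^{l}$ (part (1), the case $m_1=1$), then $2^{l}x\equiv 0\pmod{2^{l}}$, so $D^{l}(\mathbf{u})=\vec{\mathbf{0}}$ and $\mathbf{u}$ vanishes. If $m_1\mid x$ in part (2), write $x=m_1 x'$; then $2^{l}x=2^{l}m_1 x'=mx'\equiv 0\pmod m$, so again $D^{l}(\mathbf{u})=\vec{\mathbf{0}}$ and $\mathbf{u}$ vanishes. Part (1) is just the special case in which every $x$ is a multiple of $m_1=1$, so the two vanishing assertions are handled uniformly.

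For the periodic case $m_1\nmid x$ I would pass through the Chinese Remainder decomposition $\mathbb{Z}_m\cong\mathbb{Z}_{2^{l}}\times\mathbb{Z}_{m_1}$, under which $\mu$ acts as doubling on each factor. On $\mathbb{Z}_{2^{l}}$ doubling is eventually $0$ (after at most $l$ steps), so this coordinate contributes only to the tail and not to the eventual cycle; on $\mathbb{Z}_{m_1}$ doubling is a bijection because $\gcd(2,m_1)=1$. Consequently the period of the orbit of $x$ equals the period of $\bar{x}:=x\bmod m_1$ under doubling in $\mathbb{Z}_{m_1}$, that is, the least $\beta>0$ with $2^{\beta}\bar{x}\equiv\bar{x}\pmod{m_1}$, equivalently $(2^{\beta}-1)x\equiv 0\pmod{m_1}$. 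One inequality is immediate: since $2^{\delta}\equiv 1\pmod{m_1}$ we get $(2^{\delta}-1)x\equiv 0\pmod{m_1}$, so $\text{Per}(\mathbf{u})\mid\delta$ and in particular $\text{Per}(\mathbf{u})\le\delta$.

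The main obstacle is the reverse inequality $\text{Per}(\mathbf{u})\ge\delta$, i.e.\ showing that no $\beta<\delta$ already satisfies $(2^{\beta}-1)x\equiv 0\pmod{m_1}$. This amounts to upgrading the relation $(2^{\beta}-1)x\equiv 0\pmod{m_1}$ to $2^{\beta}\equiv 1\pmod{m_1}$, which is exactly where the arithmetic of $x$ relative to $m_1$ enters and is the delicate heart of the argument. When $\gcd(x,m_1)=1$ — which holds automatically when $m_1$ is prime and $m_1\nmid x$, the situation needed for Theorem~\ref{BigThm_n_not_3} — the factor $x$ is a unit mod $m_1$, so $(2^{\beta}-1)x\equiv 0$ forces $m_1\mid 2^{\beta}-1$, i.e.\ $2^{\beta}\equiv 1\pmod{m_1}$, whence $\delta\mid\beta$ and $\beta\ge\delta$. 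Combined with $\text{Per}(\mathbf{u})\le\delta$ this yields $\text{Per}(\mathbf{u})=\delta$. I would therefore expect the proof to hinge on controlling $\gcd(x,m_1)$, and I would isolate this coprimality step — equivalently, the implication $(2^{\beta}-1)x\equiv 0\pmod{m_1}\Rightarrow 2^{\beta}\equiv 1\pmod{m_1}$ for every $x$ with $m_1\nmid x$ — as the crux on which the equality $\text{Per}(\mathbf{u})=\delta$ stands or falls.
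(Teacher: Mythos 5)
Your reduction to the doubling map, and your treatment of the vanishing cases, coincide with the paper's own proof: the paper likewise starts from $D^{\alpha}(x,x,\dots,x)=2^{\alpha}(x,x,\dots,x)$ and kills the $2$-part with $D^{l}$, and your CRT splitting $\mathbb{Z}_m\cong\mathbb{Z}_{2^l}\times\mathbb{Z}_{m_1}$ is just a cleaner packaging of the paper's explicit case analysis on the exact power of $2$ dividing $x$ (the paper applies $D^{l-l_1}$ to reduce to the subcase $2^l\mid x$, then works modulo $m_1$). The genuine divergence is exactly the step you isolated as the crux, and your suspicion about it is vindicated: at the corresponding point the paper simply asserts ``Since $\delta$ is the smallest value where this can happen, $\text{Per}(\mathbf{u})=\delta$,'' with no justification. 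That assertion -- equivalently, the implication $(2^{\beta}-1)x\equiv 0 \; \text{mod} \; m_1 \Rightarrow 2^{\beta}\equiv 1 \; \text{mod} \; m_1$ -- is false when $\gcd(x,m_1)>1$, and so is the lemma as stated for composite $m_1$. Concretely, take $m=m_1=9$, $l=0$, $x=3$: then $m_1\nmid x$ and $\delta=6$, but
\[2^{2}\cdot 3=12\equiv 3 \; \text{mod} \; 9,\]
so $\text{Per}(3,3,\dots,3)=2\neq\delta$. What your analysis actually establishes is the correct general statement: for $m_1\nmid x$, the period is the multiplicative order of $2$ modulo $m_1/\gcd(x,m_1)$, which equals $\delta$ whenever $\gcd(x,m_1)=1$, in particular whenever $m_1$ is prime.

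So the gap you declined to paper over is a gap in the published proof (indeed in the statement), not in your reasoning, and your proposal restricted to the coprime case is complete and more rigorous than the paper's argument. It is also all the paper ever needs: every later invocation of Lemma \ref{uniformcondlemma} -- the period-$2$ uniform tuples in $\mathbb{Z}_3^3$ in Theorem \ref{BigThm_prime}, the $\mathbb{Z}_{11}^5$ example, the tables, and part (2) of Theorem \ref{BigThm_n_not_3} -- has $m$ prime, where $m_1\nmid x$ forces $\gcd(x,m_1)=1$. If you wanted to salvage the lemma in full generality, the fix is to restate part (2) with $\delta$ replaced by the order of $2$ modulo $m_1/\gcd(x,m_1)$; your CRT argument proves that version verbatim, since after the $2$-part dies the doubling map is a bijection on $\mathbb{Z}_{m_1}$ and the cycle length of $\bar{x}$ is the least $\beta$ with $(2^{\beta}-1)\bar{x}\equiv 0 \; \text{mod} \; m_1$.
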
 
 \begin{proof}
 We first note that $D^{\alpha}(x,x,..., x)=2^{\alpha}(x, x,..., x)$.
 
 \indent $\mathbf{(1)}$: Assume $m=2^l$. Then $D^l(x,x, ..., x)=2^l(x,x, ..., x)$, which is $\vec{\mathbf{0}}$. Therefore, $(x, x, ..., x)$ vanishes for all $x \in \mathbb{Z}_m$.
 
 \indent $\mathbf{(2)}$: First consider when $l=0$ so $m=m_1>1$ is odd. Let $\delta$ be the multiplicative order of $2 \; \text{mod} \; m$. Then 
 \[D^{\delta}(x,x, ..., x)=2^{\delta}(x,x, ..., x),\]
 or $(x,x, ..., x)$. Since $\delta$ is the smallest value where this can happen, $\text{Per}(\mathbf{u})= \delta$.
 
\indent Now let $m=2^lm_1$ where $m_1>1$ and $l>0$. Let $\delta$ be the multiplicative order of $2 \; \text{mod} \; m_1$. If $m_1|x$, then 
\[D^l(x,x, ...,x)=2^l(x,x,..., x),\]
which is $\vec{\mathbf{0}}$, so $\mathbf{u}$ vanishes.

\indent Now assume $m_1 \nmid x$ and $2^l|x$. Then 
\[D^{\delta}(x,x,...,x)=2^{\delta}(x,x,...,x).\]
This is $(x,x, ..., x)$. Once more, $\delta$ is the smallest value where this happens, so $\text{Per}(\mathbf{u})=\delta$. 

\indent Suppose now that $x$ is odd or there exists $1 \leq l_1 <l$ such that $x \equiv 0 \; \text{mod} \; 2^{l_1}$ and $x \not \equiv 0 \; \text{mod} \; 2^{l_1+1}$. Then 
\[D^{l-l_1}(x,x,..., x)=2^{l-l_1}(x,x,...,x).\]
Now $2^{l-l_1}x \equiv 0 \; \text{mod} \; 2^l$. Therefore, 
\[\text{Per}(2^{l-l_1}x, 2^{l-l_1}x, ..., 2^{l-l_1}x)= \delta.\]
Notice
\[\text{Per}(x,x,..., x)=\text{Per}(2^{l-l_1}x, 2^{l-l_1}x, ..., 2^{l-l_1}x)\]
because $2^{l-1}(x,x,...,x)$ is in the Ducci sequence of $(x,x,...,x)$.
We conclude that $\text{Per}(x,x, ..., x)=\delta$.
 \end{proof}
 
 \indent This therefore opens up one other possible period when $m$ is not a power of $2$. Note that this may not always be a distinct period from $P_m(n)$. For example, $P_{11}(5)=10$ and the multiplicative order of $2 \; \text{mod} \; 11$ is also $10$. Since $11$ is prime, all tuples that satisfy the uniformity condition in $\mathbb{Z}_{11}^5$ still generate the maximum period $P_{11}(5)$.
 
\indent To discuss the next type of possible period that can arise, we return to the example we gave in Section \ref{intro}. To do so, we build a transition graph that maps the Ducci sequences of all tuples in $\mathbb{Z}_4^3$, and then look at the connected component containing $(0,0,2)$, given in Figure \ref{Transgraph}.

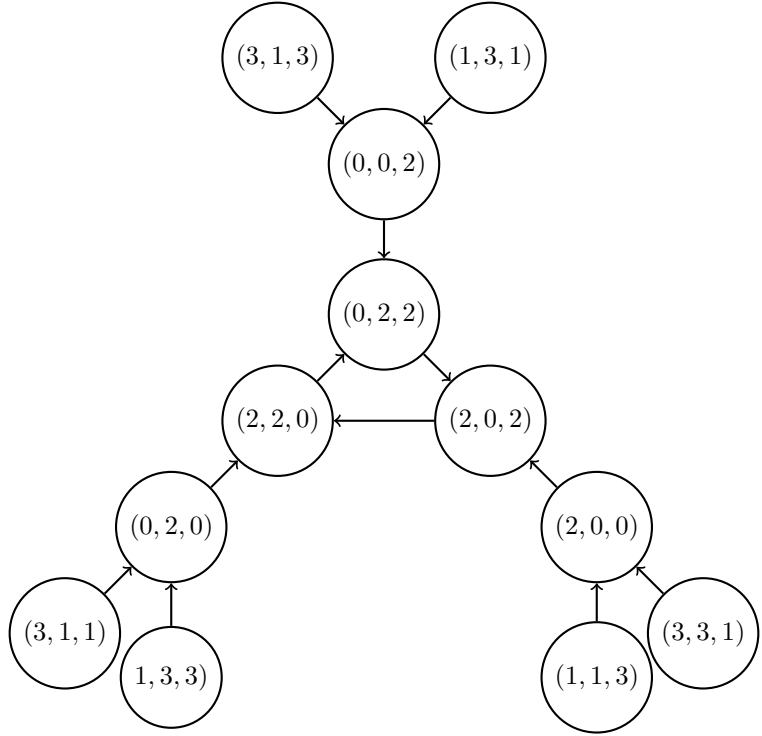
\begin{figure}
\centering

\begin{tikzpicture}[node distance={20mm}, thick, main/.style = {draw, circle}]

\node[main](1){$(0,2,2)$};
\node[main](2)[below right of=1]{$(2,0,2)$};
\node[main](3)[below left of=1]{$(2,2,0)$};

\node[main](4)[above of=1]{$(0,0,2)$};
\node[main](5)[above left of=4]{$(3,1,3)$};
\node[main](6)[above right of=4]{$(1,3,1)$};

\node[main](7)[below right of=2]{$(2,0,0)$};
\node[main](8)[below of=7]{$(1,1,3)$};
\node[main](9)[below right of=7]{$(3,3,1)$};

\node[main](10)[below left of=3]{$(0,2,0)$};
\node[main](11)[below left of=10]{$(3,1,1)$};
\node[main](12)[below of=10]{$1,3,3)$};

\draw[->](1)--(2);
\draw[->](2)--(3);
\draw[->](3)--(1);

\draw[->](4)--(1);
\draw[->](5)--(4);
\draw[->](6)--(4);

\draw[->](7)--(2);
\draw[->](8)--(7);
\draw[->](9)--(7);

\draw[->](10)--(3);
\draw[->](11)--(10);
\draw[->](12)--(10);

\end{tikzpicture}
\caption{Transition Graph for $\mathbb{Z}_4^3$}\label{Transgraph}
\end{figure}

\indent We note first that $P_4(3)=6$. From Figure \ref{Transgraph}, we can see that $\text{Per}(0,0,2)=3$, giving us another example of a tuple that generates a smaller period than the maximum. It is worth noting that the tuples in this cycle belong to the subgroup $\mathbb{Z}_2^3$, as well as $(0,0,2), (0,2,0), (2,0,0)$. In fact, the portion of Figure \ref{Transgraph} that these tuples make up in $\mathbb{Z}_4^3$ is  the same as the connected component of the transition graph of $\mathbb{Z}_2^3$ that contains the basic Ducci sequence, except that in $\mathbb{Z}_2^3$, the entries where $2$ appears are replaced with $1$. More generally speaking, we have

\begin{lemma}\label{periods_divide}
Assume $m_1|m$. Then for every $\mathbf{u} \in \mathbb{Z}_{m_1}^n$, there exists $\mathbf{v} \in \mathbb{Z}_m^n$ such that $\text{Per}(\mathbf{v})=\text{Per}(\mathbf{u})$.
\end{lemma}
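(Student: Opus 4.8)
The plan is to exhibit an explicit $\mathbf{v}$ by embedding $\mathbb{Z}_{m_1}$ into $\mathbb{Z}_m$ in a way that is compatible with the Ducci function. First I would write $k = m/m_1$, which is a positive integer since $m_1 \mid m$, and define $\phi : \mathbb{Z}_{m_1}^n \to \mathbb{Z}_m^n$ by multiplying each coordinate by $k$, that is $\phi(x_1, \ldots, x_n) = (kx_1 \bmod m, \ldots, kx_n \bmod m)$. The image of $\phi$ is exactly the unique subgroup of order $m_1$ inside $\mathbb{Z}_m$, applied coordinatewise, and $\phi$ is an injective group homomorphism onto this image; the injectivity is the feature I will lean on at the end. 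I then propose to take $\mathbf{v} = \phi(\mathbf{u})$.

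The key step is to show that $\phi$ intertwines the two Ducci functions, i.e. $D(\phi(\mathbf{w})) = \phi(D(\mathbf{w}))$ for every $\mathbf{w} \in \mathbb{Z}_{m_1}^n$, where the $D$ on the left acts on $\mathbb{Z}_m^n$ and the $D$ on the right acts on $\mathbb{Z}_{m_1}^n$. Comparing coordinates, both sides equal $k(w_i + w_{i+1})$ reduced modulo $m$; the only thing to verify is that reducing $w_i + w_{i+1}$ modulo $m_1$ before multiplying by $k$ does not change the result modulo $m$, which holds because $km_1 = m$, so any multiple of $m_1$ absorbed into the reduction becomes a multiple of $m$ after multiplication by $k$. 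Once this single-step intertwining is established, an immediate induction gives $D^{\alpha}(\phi(\mathbf{u})) = \phi(D^{\alpha}(\mathbf{u}))$ for all $\alpha \geq 0$.

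With the intertwining in hand, transferring the period is routine. For any $\alpha \geq 0$ and $\beta \geq 1$, injectivity of $\phi$ yields the chain of equivalences $D^{\alpha+\beta}(\mathbf{u}) = D^{\alpha}(\mathbf{u}) \iff \phi(D^{\alpha+\beta}(\mathbf{u})) = \phi(D^{\alpha}(\mathbf{u})) \iff D^{\alpha+\beta}(\mathbf{v}) = D^{\alpha}(\mathbf{v})$. Hence the pairs $(\alpha, \beta)$ witnessing a cycle relation are identical for $\mathbf{u}$ and $\mathbf{v}$, so the smallest qualifying $\beta$ agrees, giving $\text{Per}(\mathbf{v}) = \text{Per}(\mathbf{u})$; in fact the smallest qualifying $\alpha$ agrees as well, so $\text{Len}(\mathbf{v}) = \text{Len}(\mathbf{u})$.

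I expect the main obstacle to be bookkeeping rather than anything conceptual: one must be careful that the three operations involved, namely reduction mod $m_1$ inside $\mathbb{Z}_{m_1}$, multiplication by $k$, and reduction mod $m$ inside $\mathbb{Z}_m$, compose correctly, since this compatibility is precisely what makes $\phi$ a homomorphism of Ducci systems. Everything else, including the existence of the order-$m_1$ subgroup of $\mathbb{Z}_m$, the injectivity of $\phi$, and the passage from one-step to $\alpha$-step intertwining, is standard.
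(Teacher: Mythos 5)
Your proof is correct, and it is built on the same witness as the paper's: $\mathbf{v}=\frac{m}{m_1}\mathbf{u}$, the image of $\mathbf{u}$ under coordinatewise multiplication by $m/m_1$. The two routes differ in how the period is transferred. The paper first reduces, without loss of generality, to the case that $\mathbf{u}$ lies in its Ducci cycle, then sets $d=\text{Per}(\mathbf{u})$ and expands each entry of $D^{d}(\mathbf{v})$ in terms of the coefficients $a_{d,s}$ to show $D^{d}(\mathbf{v})=\mathbf{v}$, giving $\text{Per}(\mathbf{v})\mid d$; the reverse inequality is then dispatched in a single asserted line (``if $\text{Per}(\mathbf{v})<d$, this would imply $\text{Per}(\mathbf{u})<d$''). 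Your one-step intertwining $D\circ\phi=\phi\circ D$, combined with the injectivity of $\phi$, is precisely the justification that asserted line needs: it upgrades the transfer to the two-way equivalence $D^{\alpha+\beta}(\mathbf{u})=D^{\alpha}(\mathbf{u})\iff D^{\alpha+\beta}(\mathbf{v})=D^{\alpha}(\mathbf{v})$, which makes both the WLOG reduction and the coefficient computation unnecessary, and it yields the extra conclusion $\text{Len}(\mathbf{v})=\text{Len}(\mathbf{u})$ at no additional cost. In short: same embedding, but your commutation-plus-injectivity packaging is self-contained exactly where the paper's divisibility argument leaves the converse direction implicit.
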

It follows from Proposition 3.1 of \cite{Dular} that $P_{m_1}(n)|P_m(n)$ whenever $m_1|m$, but this lemma can extend this finding.
\begin{proof}[Proof of Lemma \ref{periods_divide}]
Assume $m_1|m$ and let $\mathbf{u}=(x_1, x_2, ..., x_n) \in \mathbb{Z}_{m_1}^n$. Without loss of generality, we may assume $\mathbf{u} \in K(\mathbb{Z}_m^n)$, otherwise we can take a tuple in the Ducci cycle of $\mathbf{u}$ instead. 

\indent Let $d=\text{Per}(\mathbf{u})$. Then $D^d(\mathbf{u})=\mathbf{u}$ in $\mathbb{Z}_{m_1}^n$, so isolating the $s$th entry, we have
\[a_{d,1}x_s+a_{d,2}x_{s+1}+ \cdots + a_{d,n}x_{s-1} \equiv x_s \; \text{mod} \; m_1.\]
Write the left side of this equivalency as $x_s + \delta_sm_1$ for some  integer $\delta_s$. 

Let $\mathbf{v}=\displaystyle{\frac{m}{m_1}}(x_1, x_2, ..., x_n) \in \mathbb{Z}_m^n$ where we maintain that $0 \leq x_i <m_1$, $x_i \in \mathbb{Z}_m$. Then, the $s$th entry of $D^d(\mathbf{v})$ is 
\[\frac{m}{m_1}(a_{d,1}x_s+ a_{d,2}x_{s+1}+ \cdots a_{d,n}x_{s-1}).\]
This is 
\[\frac{m}{m_1}(x_s+\delta_sm_1),\]
which is equivalent to $\displaystyle{\frac{m}{m_1}}x_s \; \text{mod} \; m$. Since this is true for every $1 \leq s \leq n$, $D^d(\mathbf{v})=\mathbf{v}$ and $\text{Per}(\mathbf{v})|d$. If $\text{Per}(\mathbf{v})<d$, this would imply that $\text{Per}(\mathbf{u})< d$, which forces $\text{Per}(\mathbf{v})=\text{Per}(\mathbf{u})$. 
\end{proof}

\indent This gives us that if $m_1|m$, then 
\[\{\text{Per}(\mathbf{u}) \; | \; \mathbf{u} \in \mathbb{Z}_{m_1}^n\} \subseteq \{\text{Per}(\mathbf{v}) \; | \; \mathbf{v} \in \mathbb{Z}_m^n\},\]
and provides many other possible periods when $m$ is not prime. 

\indent Notice that in Figure \ref{Transgraph}, there are other tuples branching off of the tuples in $\mathbb{Z}_2^3$ that belong to $\mathbb{Z}_4^3$, but not $\mathbb{Z}_2^3$. In this case, this must happen because $L_4(3)>L_2(3)$. Similarly, this will happen any time $L_m(n)>L_{m_1}(n)$. In other cases when $n$ is even, other tuples will appear in the transition graph when moving from $\mathbb{Z}_{m_1}^n$ to $\mathbb{Z}_m^n$ because every tuple with a predecessor will go from having $m_1$ predecessors to $m$ predecessors. This is because of Theorem 4 of \cite{Paper1}, which says if $n$ is even, then every tuple in $\mathbb{Z}_m^n$ that has a predecessor has exactly $m$ predecessors. As a result, in both of these cases, these new tuples that appear also have the smaller period, despite not belonging to $\mathbb{Z}_{m_1}^n$.

\indent We have a similar result when $n$ is not prime. If $n=n_1n_2$, $(x_1, x_2, ..., x_{n_1}) \in \mathbb{Z}_m^{n_1}$, and $(x_1, x_2, ..., x_{n_1}, x_1, ..., x_{n_1}, ..., x_1, ..., x_{n_1}) \in \mathbb{Z}_m^n$ where $(x_1, x_2, ..., x_{n_1})$ is repeated $n_2$ times, then 
\[\text{Per}(x_1, x_2, ..., x_{n_1}, x_1, ..., x_{n_1}, ..., x_1, ..., x_{n_1})=\text{Per}(x_1, x_2, ..., x_{n_1}).\]

This is because
\[D(x_1, x_2, ..., x_{n_1}, x_1, ..., x_{n_1}, ..., x_1, ..., x_{n_1})\]\[=(D(x_1, x_2, ..., x_{n_1}), D(x_1, ..., x_{n_1}), ..., D(x_1, ..., x_{n_1})).\]

\indent There is one last possible type of period that we would like to discuss. 
Consider a tuple, $(x_1, x_2, ..., x_n)$. We say it satisfies the sum condition if
\[x_1+x_2+ \cdots + x_n \equiv 0 \; \text{mod} \; m.\]
 For some $n,m$, all tuples satisfying the sum condition generate a smaller period than $P_m(n)$. If $n=3$, tuples satisfying the sum conditions always have a period of $6$, as we wish to prove in Lemma \ref{TuplesLength6}. However, there are $n,m$ where this does not happen. For example, proving Theorem \ref{BigThm_n_not_3} will show this is not the case when $n=m$ is prime. We will examine this topic more for when $n \neq 3$ where both $n,m$ are prime in Section \ref{sec_not_n=3}.


\indent Before moving on, we would like to note that for certain $n,m$, there are other tuples $\mathbf{u} \in \mathbb{Z}_m^n$ such that $\text{Per}(\mathbf{u})<P_m(n)$ where $\mathbf{u}$ does not meet any of the conditions that we have discussed in this section.

\section{The Coefficients $a_{r,s}$}

\indent In order to further investigate Ducci sequences, we will introduce a tool that helps us visualize what a tuple in the Ducci sequence of $(x_1, x_2, ..., x_n)$ looks like. First, examine the first few terms of the Ducci sequence of $(x_1, x_2, ..., x_n)$:
\[(x_1, x_2, ..., x_n)\]
     \[(x_1+x_2, x_2+x_3, ..., x_n+x_1)\]
     \[(x_1+2x_2+x_3, x_2+2x_3+x_4, ..., x_n+2x_1+x_2\]
     \[(x_1+3x_2+3x_3+x_4, x_2+3x_3+3x_4+x_5, ..., x_n+3x_1+3x_2+x_3)\]
     \[\vdots\]
     
     One can see that there is a pattern to the coefficients on the $x_i$ in each entry of a tuple in the sequence. For example, in $D^3(x_1, x_2, ..., x_n)$, the coefficients on $x_1, x_2, x_3, x_4$ all occur in the other entries of $D^3(x_1, x_2, ..., x_n)$, though not necessarily on the same $x_i$ as before. We take advantage of this pattern and define $a_{r,s}$ to be the coefficient on $x_s$ in the first entry of $D^r(x_1, x_2, ..., x_n)$, for $r \geq 0$ and $1 \leq s \leq n$ where $s$ is reduced modulo $n$, allowing the $s$ coordinate to be $n$ when $s$ is a multiple of $n$. Additionally, $a_{r,s}$ is the coefficient on $x_{s-i+1}$ in the $i$th entry of $D^r(x_1, x_2, ..., x_n)$. A discussion of why we can make these assumptions about $a_{r,s}$ is provided on page 6 of \cite{Paper1}. 
     
 \indent Theorem 5 of \cite{Paper1} also provides a few facts about $a_{r,s}$ that we will use:
 \begin{itemize}
 \item For $r \geq 1$, $a_{r,s}=a_{r-1,s}+a_{r-1,s-1}$.\\
 \item For $0 \leq r <n$, $a_{r,s}=\displaystyle{\binom{r}{s-1}}$.\\
 \item For $r \geq 1$, $t \geq 1$,
 \[a_{r+t,s}=\sum_{i=1}^n a_{t,i}a_{r,s-i+1}.\]
 \end{itemize}
 
 \indent For the basic Ducci sequence in particular, we have:
 \[D^r(0,0,...,0,1)=(a_{r,n}, a_{r, n-1}, ..., a_{r,1}).\]
 
 \indent Because so much of this paper focuses on when $n=3$, we provide some theorems about the $a_{r,s}$ coefficients  specific to when $n=3$. In addition to this, because we will only be working with three different $a_{r,s}$ coefficients, we write them as:
 \[a_r=a_{r,1}\]
 \[b_r=a_{r,2}\]
 \[c_r=a_{r,3}.\]
 So for our basic Ducci sequence, $D^r(0,0,1)=(c_r, b_r, a_r)$. Rewriting our sum formula from Theorem 5 of \cite{Paper1}, we have 
 \begin{corollary} \label{coefficient_sum_n=3}\label{n=3_coeff_sum}
Let $n=3$. For $r> t  \geq 1$,
\[a_{r+t}=a_ta_{r}+b_tc_{r}+c_tb_{r}\]
\[b_{r+t}=a_tb_{r}+b_ta_{r}+c_tc_{r}\]
\[c_{r+t}=a_tc_{r}+b_tb_{r}+c_ta_{r}.\]
\end{corollary}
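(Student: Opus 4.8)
The plan is to obtain all three identities as a direct specialization of the convolution formula
\[a_{r+t,s}=\sum_{i=1}^n a_{t,i}\,a_{r,s-i+1}\]
from Theorem 5 of \cite{Paper1}, which holds for $r\ge 1$ and $t\ge 1$ and hence in particular for $r>t\ge 1$. Setting $n=3$, the sum collapses to the three terms $i=1,2,3$, so I would simply write it out for each of the three choices $s=1,2,3$ and then translate into the abbreviated notation $a_r=a_{r,1}$, $b_r=a_{r,2}$, $c_r=a_{r,3}$.

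The one point requiring care is the second subscript on the $a_{r,\cdot}$ factors, since $s-i+1$ can fall outside the range $1\le \cdot \le 3$. Here I would invoke the stated convention that this index is reduced modulo $n=3$ (with the representative $n$ used in place of multiples of $n$), so that $a_{r,0}=a_{r,3}=c_r$ and $a_{r,-1}=a_{r,2}=b_r$. For $s=1$ the three terms are $a_{t,1}a_{r,1}$, $a_{t,2}a_{r,0}$, $a_{t,3}a_{r,-1}$; after reduction these become $a_ta_r$, $b_tc_r$, $c_tb_r$, giving the first identity. The cases $s=2$ and $s=3$ are handled identically: for $s=2$ the indices $2,1,0$ on the $a_{r,\cdot}$ factors yield $b_r,a_r,c_r$ and produce $b_{r+t}=a_tb_r+b_ta_r+c_tc_r$, and for $s=3$ the indices $3,2,1$ yield $c_r,b_r,a_r$ and produce $c_{r+t}=a_tc_r+b_tb_r+c_ta_r$.

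Since the computation is a finite expansion of a three-term sum followed by a bookkeeping step on the subscripts, there is no genuine obstacle; the only thing to get right is the modular reduction of the second index, and in particular the correspondences $0\mapsto 3$ and $-1\mapsto 2$, which are exactly what make the three right-hand sides cyclic shifts of one another.
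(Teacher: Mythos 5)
Your proposal is correct and is exactly the paper's approach: the paper offers no separate proof, presenting the corollary as a direct rewriting of the convolution formula $a_{r+t,s}=\sum_{i=1}^n a_{t,i}a_{r,s-i+1}$ from Theorem 5 of \cite{Paper1} with $n=3$, which is precisely your specialization. Your handling of the index reduction (identifying $a_{r,0}$ with $c_r$ and $a_{r,-1}$ with $b_r$ under the stated mod-$n$ convention) supplies the only detail the paper leaves implicit, and it is carried out correctly.
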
  

\indent When $n=3$, $a_r, b_r,$ and $c_r$ are all very close in value, which we can clarify in the following lemma:
 \begin{lemma}\label{n=3_coeff_relat}
 Let $n=3$. Then
\begin{itemize}
    \item If $r \equiv 0 \; \text{mod} \; 6$, $a_r=b_r+1=c_r+1$.\\
    \item If $r \equiv 1 \; \text{mod} \; 6$, then $c_r=a_r-1=b_r-1$.\\
    \item If $r \equiv 2 \; \text{mod} \; 6$, then $b_r=a_r+1=c_r+1$.\\
    \item If $r \equiv 3 \; \text{mod} \; 6$, then $a_r=b_r-1=c_r-1$.\\
    \item If $r \equiv 4 \; \text{mod} \; 6$, then $c_r=a_r+1=b_r+1$.\\
    \item If $r \equiv 5 \; \text{mod} \; 6$, then $b_r=a_r-1=c_r-1$.
    
\end{itemize}
 \end{lemma}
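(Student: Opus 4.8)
The plan is to argue by induction on $r$, using the recurrence $a_{r,s}=a_{r-1,s}+a_{r-1,s-1}$ from Theorem 5 of \cite{Paper1} (with the second index read modulo $3$). Specialized to $n=3$ this gives the three coupled recurrences $a_r=a_{r-1}+c_{r-1}$, $b_r=b_{r-1}+a_{r-1}$, and $c_r=c_{r-1}+b_{r-1}$, which are the only facts about the coefficients I will need.

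Rather than track $a_r,b_r,c_r$ individually, I would follow the three pairwise differences $d_r=a_r-b_r$, $e_r=b_r-c_r$, $f_r=c_r-a_r$ (which always sum to $0$). Subtracting the recurrences in pairs collapses them to the clean cyclic relations $d_r=-e_{r-1}$, $e_r=-f_{r-1}$, $f_r=-d_{r-1}$; that is, each step cyclically permutes the difference triple and negates it. Iterating yields $d_{r+3}=-d_r$, so the triple $(d_r,e_r,f_r)$ is periodic with period $6$. Starting from $(a_0,b_0,c_0)=(1,0,0)$ I would compute the six residue representatives $(d_r,e_r,f_r)$ for $r=0,1,2,3,4,5$ directly, obtaining in turn $(1,0,-1)$, $(0,1,-1)$, $(-1,1,0)$, $(-1,0,1)$, $(0,-1,1)$, $(1,-1,0)$.

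Each of these six triples is exactly one of the stated relations once rewritten: for instance $(d_r,e_r,f_r)=(1,0,-1)$ says $a_r-b_r=1$ and $b_r=c_r$, i.e. $a_r=b_r+1=c_r+1$, the $r\equiv 0$ case, while $(0,1,-1)$ says $a_r=b_r$ and $b_r-c_r=1$, i.e. $c_r=a_r-1=b_r-1$, the $r\equiv 1$ case. I would finish by reading off the remaining four translations in the same mechanical way.

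The argument is entirely routine, so I do not expect a genuine obstacle; the only point requiring care is the index bookkeeping in the recurrences, namely that the wrap-around term $a_{r-1,0}$ must be read as $a_{r-1,3}=c_{r-1}$, since an off-by-one here would scramble which coefficient is the outlier in each residue class. As an alternative to the difference-vector reduction, one could instead prove the six relations by a direct induction that cycles through the residues $0\to 1\to\cdots\to 5\to 0$: assuming the relation for a given $r$, a one-line substitution into the three recurrences produces the relation for $r+1$, and the six transitions close up to cover every $r\ge 0$.
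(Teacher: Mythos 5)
Your proposal is correct, and its verification checks out: the three specialized recurrences $a_r=a_{r-1}+c_{r-1}$, $b_r=b_{r-1}+a_{r-1}$, $c_r=c_{r-1}+b_{r-1}$ are exactly right (including the wrap-around $a_{r-1,0}=a_{r-1,3}=c_{r-1}$, which is the same reading the paper uses), the collapse to $(d_r,e_r,f_r)=(-e_{r-1},-f_{r-1},-d_{r-1})$ is a one-line computation, and your six representative triples and their translations back into the stated relations all match the lemma. However, your main route is organized differently from the paper's. The paper proves the lemma by induction on $r$ directly at the level of the coefficients: a basis table of $(a_r,b_r,c_r)$ for $0\le r\le 5$, followed by an inductive step that verifies the residue classes $r\equiv 0$ and $r\equiv 1 \bmod 6$ by substituting the inductive hypothesis into the recurrences, with the remaining four residues declared to follow by the same pattern. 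Your difference-vector reformulation buys two things over that: the period-$6$ structure becomes transparent rather than emergent (one sees immediately that three steps negate the triple, so six steps fix it, via $d_{r+3}=-e_{r+2}=f_{r+1}=-d_r$), and all six residue classes are handled uniformly by a single algebraic identity plus a finite computation, avoiding the per-residue casework that the paper only partially writes out. What the paper's version buys in exchange is that it never leaves the quantities named in the statement, so the stated relations appear verbatim at each step of the induction. The alternative you sketch in your final paragraph --- a direct induction cycling through the residues $0\to 1\to\cdots\to 5\to 0$ --- is precisely the paper's proof, so you have in effect produced both arguments.
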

 \begin{proof}
    We prove this via induction on $r$:
    
    \indent \textbf{Basis Case} $\mathbf{0 \leq r \leq 5}$: Below is a table of values for $a_r, b_r, c_r$ when $0 \leq r \leq 5$:
    \begin{center}
        \begin{tabular}{|c| c| c| c|} 
        \hline
        $r$ & $a_r$ & $b_r$ & $c_r$\\
         \hline  \hline
        0 & 1 & 0 & 0\\
         \hline
        1 & 1 & 1 & 0\\
         \hline
        2 & 1 & 2 & 1\\
         \hline
        3 & 2 & 3 & 3\\
         \hline
        4 & 5 & 5 &6\\
         \hline
        5 & 11 & 10 &11\\
         \hline
        \end{tabular}
    \end{center}
    
     \indent \textbf{Inductive Step:}  
Assume $r \equiv 0 \; \text{mod} \; 6$, the lemma is true for all $r'<r$, and let $z \in \mathbb{Z}^+$ such that $r=6z$. Then
\[a_{6z}=a_{6z-1}+c_{6z-1}.\]
By induction, this is
\[b_{6z-1}+1+c_{6z-1}\]
or $c_{6z}+1$.

\indent We also have by induction that
\[a_{6z}=a_{6z-1}+b_{6z-1}+1,\]
which is $b_{6z-1}+1$.

\indent We now prove $c_{1+6z}=a_{1+6z}-1=b_{1+6z}-1$. Note
\[c_{1+6z}=c_{6z}+b_{6z},\]
which, by the previous case, is
\[c_{6z}+a_{6z}-1,\]
or $a_{1+6z}-1$.

\indent The previous case also yields 
\[c_{1+6z}=a_{6z}-1+b_{6z}\]
which is $b_{6z+1}-1$.

\indent We can repeat this pattern for $y+6z$ where $ 2 \leq y \leq 5$ and the lemma will follow.  
\end{proof}

Using this, we can also prove a lemma that gives some insight into $P_m(3)$ that we will use later:
\begin{lemma}\label{6_divides_Period}
For $m \geq 3$, $6|P_m(3)$.
\end{lemma}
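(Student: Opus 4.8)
The plan is to read off $P_m(3)$ as the period of the basic Ducci sequence, whose terms are $D^r(0,0,1)=(c_r,b_r,a_r)$, and to extract a mod-$6$ obstruction from the tight relations among $a_r,b_r,c_r$ recorded in Lemma \ref{n=3_coeff_relat}. Concretely, if $\beta=P_m(3)$ then for every $r$ lying in the cycle we have $a_{r+\beta}\equiv a_r$, $b_{r+\beta}\equiv b_r$, and $c_{r+\beta}\equiv c_r \pmod{m}$; subtracting, the two differences $a_r-b_r$ and $b_r-c_r$ must be preserved modulo $m$ under the shift $r\mapsto r+\beta$.

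First I would observe, directly from Lemma \ref{n=3_coeff_relat}, that both $a_r-b_r$ and $b_r-c_r$ always lie in $\{-1,0,1\}$, and that the pair $(a_r-b_r,\,b_r-c_r)$ depends only on $r \bmod 6$, taking the six values $(1,0),(0,1),(-1,1),(-1,0),(0,-1),(1,-1)$ as $r$ runs through $0,1,2,3,4,5 \bmod 6$. Since these six pairs are pairwise distinct, the residue $r \bmod 6$ is completely determined by the pair $(a_r-b_r,\,b_r-c_r)$.

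The role of the hypothesis $m\geq 3$ is exactly to prevent this information from being lost under reduction: any two integers in $\{-1,0,1\}$ that are congruent modulo $m$ with $m \geq 3$ are in fact equal, because distinct elements of $\{-1,0,1\}$ differ by at most $2 < m$. Applying this to the congruences $a_{r+\beta}-b_{r+\beta}\equiv a_r-b_r$ and $b_{r+\beta}-c_{r+\beta}\equiv b_r-c_r \pmod{m}$, I would conclude that these differences agree as integers, so the pair $(a_r-b_r,\,b_r-c_r)$ is unchanged under the shift and hence $r+\beta\equiv r \pmod{6}$. Therefore $6\mid \beta=P_m(3)$.

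I do not expect a serious obstacle here; the only points requiring care are the finite verification that the six difference-pairs are pairwise distinct (a direct table lookup from Lemma \ref{n=3_coeff_relat}) and the observation that $m\geq 3$ is what makes $\{-1,0,1\}$ inject into $\mathbb{Z}_m$. Indeed the statement genuinely fails for $m=2$, where $-1\equiv 1$ and the mod-$6$ pattern collapses, so this hypothesis is not incidental but essential to the argument.
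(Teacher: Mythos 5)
Your proposal is correct and is essentially the paper's own argument: both proofs take a point in the cycle of the basic Ducci sequence, note that the period $\beta=P_m(3)$ must preserve the congruences of $a_r,b_r,c_r$ modulo $m$, and then use the integer relations of Lemma \ref{n=3_coeff_relat} to conclude $r+\beta\equiv r \pmod 6$. The only difference is organizational — you package the information as the difference pair $(a_r-b_r,\,b_r-c_r)$, which injects $r \bmod 6$ into $\{-1,0,1\}^2$ and makes the role of $m\geq 3$ explicit in one stroke, whereas the paper fixes $\alpha\equiv 0\pmod 6$ and runs a case analysis on $d \bmod 6$ to reach the same contradictions.
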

\begin{proof}
Assume $m \geq 3$. Take $\alpha \equiv 0 \; \text{mod} \; 6$ large enough so that 
\[D^{\alpha}(0,0,1)=(c_{\alpha}, b_{\alpha}, a_{\alpha}) \in K(\mathbb{Z}_m^3).\] Let $d=P_m(3)$. Then we should have that 
\[a_{\alpha+d} \equiv a_{\alpha} \; \text{mod} \; m\]
\[b_{\alpha+d} \equiv b_{\alpha} \; \text{mod} \; m\]
\[c_{\alpha+d} \equiv c_{\alpha} \; \text{mod} \; m.\]
Notice that $\alpha \equiv 0 \; \text{mod} \; 6$ gives us that $b_{\alpha}=c_{\alpha}=a_{\alpha}-1$. If $d \not\equiv 0 \; \text{or} \; 3 \; \text{mod} \; 6$, then $c_{\alpha+d} \neq b_{\alpha+d}$, but $b_{\alpha+d} \equiv b_{\alpha} \; \text{mod} \; m$, which implies
 $c_{\alpha+d} \not \equiv c_{\alpha} \; \text{mod} \; m$, which contradicts $d=P_m(3)$.
 
\indent If we have that $d \equiv 3 \; \text{mod} \; 6$, and $m>2$ then $a_{\alpha+d}+1=b_{\alpha+d}$,
Because 
\[b_{\alpha+d}\equiv b_{\alpha} \; \text{mod} \; m,\] 
and $b_{\alpha}=a_{\alpha}-1$,
this implies $a_{\alpha+d} \not \equiv a_{\alpha} \; \text{mod} \; m$, which again contradicts $d=P_m(3)$. Therefore $6|P_m(3)$ for $m>2$.
\end{proof}

\section{Possible Periods for $n=3$}\label{Secn=3}
\indent We can now prove Lemma \ref{TuplesLength6}.

\begin{proof}[Proof of Lemma \ref{TuplesLength6}]
Assume $\mathbf{u}=(x_1, x_2, x_3) \in \mathbb{Z}_m^3$ satisfies the sum condition.

\textbf{(1):} It suffices to show if $D(\mathbf{u})=(x_1', x_2', x_3')$ then $x_1'+x_2'+x_3' \equiv 0 \; \text{mod} \; m$. Notice $x_i'=x_i+x_{i+1}$ for $i=1,2$ and $x_3'=x_3+x_1$. Therefore
\[x_1'+x_2'+x_3'=2(x_1+x_2+x_3), \]
which is equivalent to $0 \; \text{mod} \; m$.

\textbf{(2):} First, 
\[D^2(\mathbf{u})=(x_1+2x_2+x_3,x_2+2x_3+x_1, x_3+2x_1+x_2).\]
 Notice that 
\[x_i+2x_{i+1}+x_{i+2}=x_{i+1}+(x_1+x_2+x_3), \]
which is equivalent to $x_{i+1} \; \text{mod} \; m$ for $1 \leq i \leq 3$, where the subscripts are reduced modulo $3$. Therefore
\[D^2(\mathbf{u})=(x_2, x_3, x_1)\]
or $H(\mathbf{u})$.

\textbf{(3):} Here, $D^6(\mathbf{u})=H^3(\mathbf{u})$, which is $\mathbf{u}$. Therefore $\mathbf{u} \in K(\mathbb{Z}_m^3)$.

\textbf{(4)-(5):} From the proof of (3), $\text{Per}(\mathbf{u})|6$. Therefore, $\text{Per}(\mathbf{u})=1, 2, 3 \; \text{or} \; 6$. Assume $(x_1, x_2, x_3)$ does not satisfy the uniformity condition and is not $\vec{\mathbf{0}}$.

\indent If $\text{Per}(\mathbf{u})=1,2$, then $\mathbf{u}=D^2(\mathbf{u})=H(\mathbf{u})$ implies $x_1=x_2=x_3$, which is a contradiction.

\indent If $\text{Per}(\mathbf{u})=3$, then $\mathbf{u}=D^3(\mathbf{u})=D(x_2, x_3, x_1)$ implies
\[x_2+x_3 \equiv x_1 \; \text{mod} \; m\]
\[x_1+x_3 \equiv x_2 \; \text{mod} \; m\]
\[x_1+x_2 \equiv x_3 \; \text{mod} \; m.\]
Plugging the third equation into the first yields
\[2x_2+x_1 \equiv x_1 \; \text{mod} \; m,\]
or that
\[2x_2 \equiv 0 \; \text{mod} \; m.\]
Similarly, $2x_3 \equiv 0 \; \text{mod} \; m$ and $2x_1 \equiv 0 \; \text{mod} \; m$.

\indent If $m$ is odd, then this only happens when $\mathbf{u}=\vec{\mathbf{0}}$, which contradicts our assumptions.

\indent If $m$ is even, this can only happen when  $x_1, x_2, x_3 \in \{0, \frac{m}{2}\}$, in which case, $\mathbf{u}$ is in the  transition graph given in Figure \ref{SmallTransgraph}.
\begin{figure}
\begin{tikzpicture}[node distance={20mm}, thick, main/.style = {draw, circle}]
\node[main] (1) {$(0,\frac{m}{2},\frac{m}{2})$};
\node[main] (2) [below right of=1] {$(\frac{m}{2},0,\frac{m}{2})$};
\node[main] (3) [below left of=1] {$(\frac{m}{2},\frac{m}{2},0)$};
\node[main] (4) [above of=1] {$(0,0,\frac{m}{2})$};
\node[main] (5) [below right of=2] {$(\frac{m}{2},0,0)$};
\node[main] (6) [below left of=3] {$(0,\frac{m}{2},0)$};

\draw[->] (1)--(2);
\draw[->] (2)--(3);
\draw[->] (3)--(1);
\draw[->] (4)--(1);
\draw[->] (5)--(2);
\draw[->] (6)--(3);
\end{tikzpicture}
\caption{Part of Transition Graph for $\mathbb{Z}_m^3$, $m$ Even}\label{SmallTransgraph}
\end{figure}
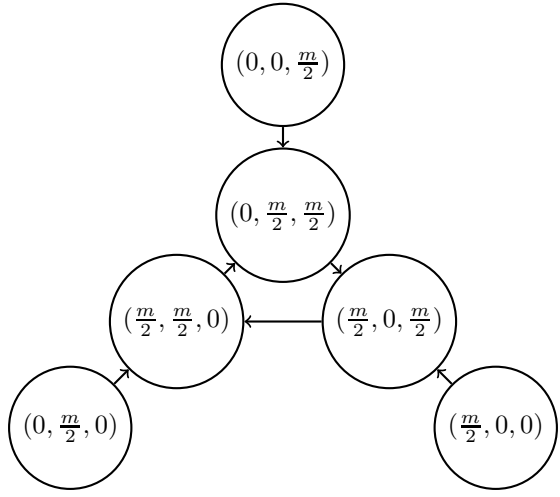

Note that there might be other tuples in $\mathbb{Z}_m^3$ whose Ducci sequences lead into this transition graph. Regardless, it is clear that if $x_1, x_2, x_3 \in \{0, \frac{m}{2}\}$, then $\text{Per}(\mathbf{u})=3$.

\end{proof}

This also yields the following corollary:
\begin{corollary}
For $m>2$ and $n=3$, a Ducci sequence with cycle of length $6$ always exists.
\end{corollary}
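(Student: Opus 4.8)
The plan is to produce, for each $m>2$, a single explicit tuple satisfying the sum condition whose period Lemma~\ref{TuplesLength6} forces to be exactly $6$. The natural candidate is $\mathbf{u}=(1,m-1,0)$, since it works uniformly for both parities of $m$. First I would check the sum condition: $1+(m-1)+0=m\equiv 0\pmod m$, so $\mathbf{u}$ satisfies the hypothesis of Lemma~\ref{TuplesLength6}. Next I would verify that the entries are not all equal, which is immediate because $0\neq 1$. Hence the uniformity-condition exclusion in parts (4) and (5) of the lemma is met.

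The only place requiring care is the even case, where part (5) of the lemma permits the exceptional value $\text{Per}(\mathbf{u})=3$ precisely when all three entries lie in $\{0,\tfrac{m}{2}\}$. Here the hypothesis $m>2$ does the work: when $m$ is even we have $m\geq 4$, so $\tfrac{m}{2}\geq 2$, and hence the entry $1$ lies in neither $\{0\}$ nor $\{\tfrac{m}{2}\}$. Thus $\mathbf{u}$ avoids the exceptional configuration, and part (5) yields $\text{Per}(\mathbf{u})=6$. When $m$ is odd, part (4) applies directly and gives $\text{Per}(\mathbf{u})=6$ with no further checking. In either case the Ducci cycle of $\mathbf{u}$ has length $6$, which is the desired conclusion.

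I expect no genuine obstacle here; the entire argument is a direct application of the already-proved Lemma~\ref{TuplesLength6}, and the only subtlety is selecting a tuple whose nonzero, non-$\tfrac{m}{2}$ entry rules out the period-$3$ exception when $m$ is even. Choosing an entry equal to $1$ makes this transparent once $m>2$ guarantees $\tfrac{m}{2}\neq 1$. If one preferred, the odd and even cases could be collapsed into a single sentence by noting that $(1,m-1,0)$ is never of the exceptional form for any $m>2$, but separating them mirrors the structure of the lemma being invoked.
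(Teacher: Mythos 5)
Your proof is correct and takes the same route the paper intends: the corollary is stated as an immediate consequence of Lemma~\ref{TuplesLength6}, and you simply make the implicit witness explicit. The tuple $(1,m-1,0)$ satisfies the sum condition, has unequal entries, and (for even $m>2$) has the entry $1\notin\{0,\frac{m}{2}\}$, so parts (4) and (5) of the lemma give $\text{Per}(\mathbf{u})=6$ exactly as you argue.
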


We now prove Theorem \ref{BigThm_prime}:
 \begin{proof}[Proof of Theorem \ref{BigThm_prime}]
 Let $n=3$ and $m$ be prime. We want to show that if $\mathbf{u}$ does not satisfy the sum or uniformity condition, then $\text{Per}(\mathbf{u})=P_m(3)$. 
 
\indent We first address the case where $m=3$. Here, $P_3(3)=6$. If you plot out the Ducci sequences for all $27$ tuples in $\mathbb{Z}_3^3$, you would find that the only times when $\text{Per}(\mathbf{u}) \neq 6$ for $\mathbf{u}=(x_1, x_2, x_3) \in \mathbb{Z}_3^3$, is when $x_1=x_2=x_3$. Here, tuples meeting the uniformity condition have period $2$.
 
\indent Assume that $m>3$ is prime. Recall that $6 |P_m(3)$ by Theorem \ref{6_divides_Period}. Then $a_{P_m(3)} \equiv 1 \; \text{mod} \; m$ and $b_{P_m(3)}=c_{P_m(3)} \equiv 0 \; \text{mod} \; m$. The first time that $a_r, b_r, c_r$ satisfy this property is when $r=P_m(3)$, as if there was a smaller number where this happened $s<P_m(3)$, then 
\[D^s(0,0,1)=(c_s, b_s, a_s)=(0,0,1)\]
 and $P_m(3)=s$ would follow.
 
\indent Suppose that there exists $\mathbf{u}=(x_1, x_2, x_3) \in \mathbb{Z}_m^3$ that does not satisfy the sum condition or the uniformity condition and $\text{Per}(\mathbf{u})=d<P_m(3)$. Then, because $L_m(3)=0$ by Theorem 7 of \cite{Paper3},
\begin{equation}\label{Eq_orig_eq1}
a_dx_1+b_dx_2+c_dx_3 \equiv x_1 \; \text{mod} \; m
\end{equation}
\begin{equation}\label{Eq_orig_eq2}
c_dx_1+a_dx_2+b_dx_3 \equiv x_2 \; \text{mod} \; m
\end{equation}
\begin{equation}\label{Eq_orig_eq3}
b_dx_1+c_dx_2+a_dx_3 \equiv x_3 \; \text{mod} \; m.
\end{equation}
Adding these together gives us 
\[(a_d+b_d+c_d)(x_1+x_2+x_3) \equiv (x_1+x_2+x_3) \; \text{mod} \; m.\]
Since $m$ is prime and $x_1+x_2+x_3 \not \equiv 0 \; \text{mod} \; m$, we see 
\begin{equation}\label{Eq_sum=1}
a_d+b_d+c_d \equiv 1 \; \text{mod} \; m.
\end{equation}
 If $d \equiv 0 \; \text{mod} \; 6$
    then 
    \[a_d+b_d+c_d=3b_d+1\]
    by Lemma \ref{n=3_coeff_relat}.
    By Equation (\ref{Eq_sum=1}),
    \[3b_d+1 \equiv 1 \; \text{mod} \; m\]
    and
    \[3b_d \equiv 0 \; \text{mod} \; m.\]
     Because $m>3$ is prime, this yields
    \[b_d \equiv 0 \; \text{mod} \; m.\]
   Since $d \equiv 0 \; \text{mod} \; 6$, $c_d \equiv 0 \; \text{mod} \; m$ and $a_d \equiv 1 \; \text{mod} \; m$. This contradicts $P_m(3)>d$. Therefore, $\text{Per}(\mathbf{u})=P_m(3)$.
   
\indent Suppose $d \not \equiv 0 \; \text{mod} \; 6$. We have that $d|P_m(3)$, so take $d_1=2d, 3d, \;\text{or} \; 6d$ so that $d_1 \equiv 0 \; \text{mod} \; 6$ and $d_1$ is as small as possible. If $d_1<P_m(3)$, then we repeat the above argument for $d_1$ and conclude that $\text{Per}(\mathbf{u})=P_m(3)$. If you cannot take $d_1$ such that $d_1<P_m(3)$, then this means that $P_m(3) \in \{2d,3d,6d\}$. We look at these as three different cases.

\textbf{Case 1} $\mathbf{P_m(3)=2d}$: In this case, $d \equiv 3 \; \text{mod} \; 6$. Then, by Lemma \ref{n=3_coeff_relat}, $b_d=c_d$ and $a_d=b_d-1$. Note that $b_{P_m(3)}\equiv 0 \; \text{mod} \; m$ because 
\[D^{P_m(3)}(0,0,1)=(0,0,1).\]
Therefore, 
\[b_{2d} \equiv 0 \; \text{mod} \; m.\]
However, if we break down $b_{2d}$ using Corollary \ref{coefficient_sum_n=3}, we have
\[b_{2d}=2a_db_d+c_d^2.\]
Substituting $a_d,c_d$ in terms of $b_d$, this is
\[2(b_d-1)b_d+b_d^2,\]\
or
\[b_d(3b_d-2), \]
which we know is equivalent to $0 \; \text{mod} \; m$.
So either $b_d \equiv 0 \; \text{mod} \; m$ or $3b_d-2 \equiv 0 \; \text{mod} \; m$. If $b_d \equiv 0 \; \text{mod} \; m$, then $c_d \equiv 0 \; \text{mod} \; m$ and $a_d \equiv -1 \; \text{mod} \; m$, which contradicts Equation (\ref{Eq_sum=1}). 

\indent Assume $3b_d-2 \equiv 0 \; \text{mod} \; m$. Then, 
 \[3b_d \equiv 2 \; \text{mod} \; m\]
 \[3c_d \equiv 2 \; \text{mod} \; m\]
 \[3a_d \equiv -1 \; \text{mod} \; m.\]
 
 Multiplying Equations (\ref{Eq_orig_eq1}), (\ref{Eq_orig_eq2}), and (\ref{Eq_orig_eq3}) by $3$ and substituting in these equivalencies, this yields
 \[-x_1+2x_2+2x_3 \equiv 3x_1 \; \text{mod} \; m,\]
 \[2x_1-x_2+2x_3 \equiv 3x_2 \; \text{mod} \; m\]
 \[2x_1+2x_2-x_3 \equiv 3x_3 \; \text{mod} \; m.\]
 Moving all of the $x_i$ to one side and dividing by $2$ gives us
 \[-2x_1+x_2+x_3 \equiv 0 \; \text{mod} \; m\]
 \[x_1-2x_2+x_3 \equiv 0 \; \text{mod} \; m\]
 \[x_1+x_2-2x_3 \equiv 0 \; \text{mod} \; m.\]
 Next, solve for $x_1$ in the second two equations and set them equal to produce
  \[-x_2+2x_3 \equiv 2x_2-x_3 \; \text{mod} \; m\]
 or
 \[x_2 \equiv x_3 \; \text{mod} \; m.\]
 Finally, plugging this into the first equation leads us to 
 \[x_1 \equiv x_2 \; \text{mod} \; m \equiv x_3 \; \text{mod} \; m.\]
  However, we were assuming that $\mathbf{u}$ did not meet the uniformity condition, and we have a contradiction. Hence, $P_m(3)\neq 2d$.

\textbf{Case 2} $\mathbf{P_m(3)=3d}$: In this case, we either have $d \equiv 2 \; \text{mod} \; 6$ or $d \equiv 4 \; \text{mod} \; 6$, which we handle separately.

\begin{itemize}
    \item If $d \equiv 2 \; \text{mod} \; 6$, then $a_d=c_d$ and $b_d=c_d+1$. 
    Therefore, Equation (\ref{Eq_sum=1}) gives us $3c_d+1 \equiv 1 \; \text{mod} \; m$ and therefore $c_d \equiv 0 \; \text{mod} \; m$.
    
    \indent This produces $a_d\equiv 0 \; \text{mod} \; m$ and $b_d \equiv 1 \; \text{mod} \; m$ which means that
    \[x_2 \equiv x_1 \; \text{mod} \; m\]
    \[x_3 \equiv x_2 \; \text{mod} \; m\]
    \[x_1 \equiv x_3 \; \text{mod} \; m.\]
    But we were assuming that $x_1, x_2, x_3$ not all equal, so we have a contradiction.\\
    \item If $d \equiv 4 \; \text{mod} \; 6$, then $a_d=b_d$, $c_d=b_d+1$. 
    Using Equation (\ref{Eq_sum=1}) yields $3b_d+1 \equiv 1 \; \text{mod} \; m$ and $b_d \equiv 0 \; \text{mod} \; m$. Therefore, 
    \[x_3 \equiv x_1 \; \text{mod} \; m\]
    \[x_1 \equiv x_2 \; \text{mod} \; m\]
    \[x_2 \equiv x_3 \; \text{mod} \; m.\]
    But we were assuming $x_1, x_2, x_3$ not all equal. Therefore we have a contradiction.
\end{itemize}
In either case, we cannot have $P_m(3)=3d$.

\textbf{Case 3} $\mathbf{P_m(3)=6d}$: In this case, either $d \equiv 1 \; \text{mod} \; 6$ or $d \equiv 5 \; \text{mod} \; 6$. Again, we handle these separately.
 \begin{itemize}
 \item Assume $d \equiv 1 \; \text{mod} \; 6$. Then we have $a_d=b_d$ and $c_d=b_d-1$. This gives us 
 \[a_d+b_d+c_d=3b_d-1.\]
 From Equation (\ref{Eq_sum=1}), this produces
 \[3b_d-1 \equiv 1 \; \text{mod} \; m,\]
and
 \[3b_d \equiv 2 \; \text{mod} \; m\]
 \[3a_d \equiv 2 \; \text{mod} \; m\]
 \[3c_d \equiv -1 \; \text{mod} \; m.\]
 Multiply Equations (\ref{Eq_orig_eq1}), (\ref{Eq_orig_eq2}), and (\ref{Eq_orig_eq3}) by $3$ and substitute these equivalencies to produce
 \[2x_1+2x_2-x_3 \equiv 3x_1 \; \text{mod} \; m\]
 \[-x_1+2x_2+2x_3 \equiv 3x_2 \; \text{mod} \; m\]
 \[2x_1-x_2+2x_3 \equiv 3x_3 \; \text{mod} \; m.\]
 Rearranging our terms yields the equations
 \begin{equation}\label{Eq_new1}
 -x_1+2x_2-x_3 \equiv 0 \; \text{mod} \; m
 \end{equation}
 \begin{equation}\label{Eq_new2}
 -x_1-x_2+2x_3 \equiv 0 \; \text{mod} \; m
 \end{equation}
 \begin{equation}\label{Eq_new3}
 2x_1-x_2-x_3 \equiv 0 \; \text{mod} \; m.
 \end{equation}
If we use Equation (\ref{Eq_new3}) to solve for $x_3$ and plug this into Equation (\ref{Eq_new1}), we have
\[-3x_1+3x_2 \equiv 0 \; \text{mod} \; m,\]
which forces $x_1 \equiv x_2 \; \text{mod} \; m$. Plugging this back into Equation (\ref{Eq_new3}) leads to $x_1 \equiv x_3 \; \text{mod} \; m$, which means $x_1 \equiv x_2 \; \text{mod} \; m \equiv x_3 \; \text{mod} \; m$ and we have a contradiction.

\item Assume $d \equiv 5 \; \text{mod} \; 6$. Then $a_d=c_d$ and $b_d=c_d-1$. Therefore, 
\[a_d+b_d+c_d=3c_d-1.\]
Because of Equation (\ref{Eq_sum=1}), we have 
\[3c_d-1 \equiv 1 \;\text{mod} \; m.\]
This produces 
\[3c_d \equiv 2 \; \text{mod} \; m\]
\[3a_d \equiv 2 \; \text{mod} \; m\]
\[3b_d \equiv -1 \; \text{mod} \; m.\]
We once more multiply Equations (\ref{Eq_orig_eq1}), (\ref{Eq_orig_eq2}), and (\ref{Eq_orig_eq3}) by $3$ and use these equivalencies to yield
\[2x_1-x_2+2x_3 \equiv 3x_1 \; \text{mod} \; m\]
\[2x_1+2x_2-x_3 \equiv 3x_2 \; \text{mod} \; m\]
\[-x_1+2x_2+2x_3 \equiv 3x_3 \; \text{mod} \; m.\]
However, these are Equations (\ref{Eq_new1}), (\ref{Eq_new2}), and (\ref{Eq_new3}) after moving all of the $x_i$ to the left side, which still results in $x_1 \equiv x_2 \; \text{mod} \; m \equiv x_3 \; \text{mod} \; m$ and a contradiction.
 
 \end{itemize}
 Therefore, the only possibility is that $\text{Per}(\mathbf{u})=P_m(n)$ and the theorem follows.
 \end{proof}

\section{Examining Possible Periods when $n,m$ Prime}\label{sec_not_n=3}
\indent For the remainder of the paper, we will consider when $n,m$ are prime, but allow for $n \neq 3$.

\indent As discussed in Sections \ref{Background} and \ref{sec_not_n=3}, there are cases where tuples satisfying the sum condition generate a smaller period. This is certainly true for when $n=3$ and all $m$ when $P_m(3)>6$ by Lemma \ref{TuplesLength6}. However, this is not always the case. 

\indent To further investigate this, we present a few cases where $n,m$ prime. Figure \ref{Table1} provides cases where $n>3$, $n,m$ are both prime, and all tuples that are not $\vec{\mathbf{0}}$ and do not satisfy the uniformity condition generate the maximum period. The value of $P_m(n)$ and the period of all tuples that do satisfy the uniformity condition has also been included.

\begin{figure}
\centering

\begin{tabular}{|c|c|c|c|}
\hline
$n$ & $m$ & $P_m(n)$ & $x_i \neq 0$ all equal\\
\hline
5 & 3 & 40 & 2\\
  & 5 & 20 & 4\\
\hline
7 & 7 & 21 & 3\\
\hline
11 & 3 & 242 & 2\\
   & 11 & 110 & 10\\
\hline
13 & 3 & 26 & 2\\
   & 13 & 156 & 12\\
   & 17 & 63856 & 8\\
   & 29 & 24388 & 28\\
 \hline
 17 & 3 & 27880 & 2\\
    & 17 & 136 & 8\\
  \hline
  19 & 3 & 373958 & 2\\
     & 19 & 342 & 18\\
  \hline

\end{tabular}
\caption{$n,m$ prime, three possible periods}\label{Table1}
\end{figure}

\indent Figure \ref{Table2} provides cases where $n>3$ and $n,m$ prime and the only tuples with periods smaller than the maximum are $\vec{\mathbf{0}}$ and those that satisfy the uniformity or sum condition. We are including the periods that the tuples satisfying the sum condition generate, in addition to the information provided in Figure \ref{Table1}.

\indent For the cases in Figure \ref{Table2}, note that the periods of the tuples satisfying the sum condition vary, even for a fixed $n$, unlike what we see when $n=3$.

\begin{figure}
\centering
\begin{tabular}{|c|c|c|c|c|}
\hline
n & m & $P_m(n)$ & $x_i \neq 0$ all equal & $\displaystyle{\sum_{i=1}^n x_i \equiv 0 \; \text{mod} \; m}$\\
\hline 
5 & 7 & 240 & 3 & 80\\
  & 13 & 420 & 12 & 140\\
  & 17 & 360 & 8 & 180\\
  & 23 & 2640 & 11 & 240\\
  & 29 & 140 & 28 & 70\\
  \hline
7 & 3 & 182 & 2 & 91\\
  & 5 & 868 & 4 & 217\\
  & 17 & 17192 & 8 & 2149\\
  & 19 & 16002 & 18 & 889\\
  & 23 & 6083 & 11 & 553\\
\hline
11 & 5 & 3124 & 4 & 1562\\
   & 7 & 184866 & 3 & 16806\\
   & 13 & 4084212 & 12 & 680702\\
   & 17 & 7809208 & 8 & 1952302\\
   & 19 & 27237078 & 18 & 3026342\\
\hline
13 & 7 & 509808 & 3 & 169936\\
   & 11 & 7676760 & 10 & 1535352\\
   \hline

\end{tabular}
\caption{$n,m$ Prime, four possible periods}\label{Table2}
\end{figure}

\indent We now discuss how we made these conclusions. First, we run a program in MATLAB (from \cite{MATLAB}) that can determine the value of $P_m(n)$ for a given $n$ and $m$.
Observing Figures \ref{Table1} and \ref{Table2}, the value of $P_m(n)$ typically increases as $n,m$ increase. As a result, for many $n,m$, $P_m(n)$ becomes too large for MATLAB to find, preventing us from testing these cases. Our current program is currently set to stop trying to calculate $P_m(n)$ after the program determines it is larger than $15$ million.

\indent Once we find $P_m(n)$, Lemma 1 of \cite{Breuer1} tells us that $\text{Per}(\mathbf{u})|P_m(n)$ for every $\mathbf{u} \in \mathbb{Z}_m^n$. We can then create a list of possible periods for $\mathbf{u}$ out of the divisors of $P_m(n)$.

\indent We now consider the example of $n=5$ and $m=7$ to demonstrate our strategy. Because $P_7(5)=240$, the possible periods are 
\[\{1,2,3,4,5,6,8,10,12,15, 16,20,24, 30, 40, 48, 60, 80, 120, 240\}.\]
 We also take advantage of Theorem 7 of \cite{Paper3}, which says $L_m(n)=0$. This is notable because for any tuple $\mathbf{u} \in \mathbb{Z}_7^5$, we have
\[D^{\text{Per}(\mathbf{u})}(\mathbf{u})=\mathbf{u}.\]
If we wish to test whether a divisor of $P_m(n)$, $d$, satisfies $\text{Per}(\mathbf{u})=d$ for some $\mathbf{u} \in \mathbb{Z}_7^5$, we want to determine if there exists $(x_1, x_2, x_3, x_4, x_5)$ such that 
\[D^d(x_1, x_2, x_3, x_4, x_5)=(x_1, x_2, x_3, x_4, x_5).\]
We will start by testing $40$. Notice the values of $a_{40,s}$ are as follows:
\begin{center}
\begin{tabular}{|c|c|c|c|c|c|}
\hline
$s$ & 1 & 2 & 3 & 4 & 5\\
\hline
$a_{40,s}$ & 1 & 2 & 2 & 2 & 2\\
\hline
\end{tabular}
\end{center}

If $\text{Per}(x_1, x_2, x_3, x_4, x_5)=40$, then $(x_1, x_2, x_3, x_4, x_5)$ satisfies the system of equations
\[a_{40,1}x_i+a_{40,2}x_{i+1}+ \cdots +a_{40,5}x_{i-1} \equiv x_i \text{mod} \; 7\]
for $1 \leq i \leq 5$, or
\[A_1
\begin{bmatrix}
x_1\\
x_2\\
x_3\\
x_4\\
x_5

\end{bmatrix}
=\begin{bmatrix}
0\\
0\\
0\\
0\\
0

\end{bmatrix}
\]
where 
\[A_1=\begin{bmatrix}
0 & 2 & 2 & 2 & 2 \\
2 & 0 & 2 & 2 & 2\\
2 & 2 & 0 & 2 & 2 \\
2 & 2 & 2 & 0 & 2 \\
2 & 2 & 2 & 2 & 0 
\end{bmatrix}.\]
\indent Since $det(A_1)=128$, $A_1$ is invertible, so there is only one solution. Because $D^{40}(\vec{\mathbf{0}})=\vec{\mathbf{0}}$,
this one solution must be $\vec{\mathbf{0}}$. Since $\text{Per}(\vec{\mathbf{0}})=1$, we can determine that there is not a tuple $(x_1, x_2, x_3, x_4, x_5)$ such that $\text{Per}(x_1, x_2, x_3, x_4, x_5)=40$

\indent However, we must also bear in mind that if there exists $\mathbf{u} \in \mathbb{Z}_m^n$ such that $\text{Per}(\mathbf{u})=d$ where $d|d^*$, and $d^*|240$, then  $D^{d^*}(\mathbf{u})=\mathbf{u}$.
Therefore, this also confirmed that $2,4,5,8,10$ are also not periods for any tuple in $\mathbb{Z}_7^5$ because they are divisors of $40$.

\indent Now, let us test $120$ and its divisors as possible periods. Here, we aim to solve the system of equations

\[A_2
\begin{bmatrix}
x_1\\
x_2\\
x_3\\
x_4\\
x_5
\end{bmatrix}
=\begin{bmatrix}
0\\
0\\
0\\
0\\
0
\end{bmatrix}\]

where 
\[A_2=\begin{bmatrix}
4 & 6 & 6 & 6 & 6 \\
6 & 4 & 6 & 6 & 6 \\
6 & 6 & 4 & 6 & 6 \\
6 & 6 & 6 & 4 & 6 \\
6 & 6 & 6 & 6 & 4 

\end{bmatrix}.\]
Because $det(A_2)=448$, $A_2$ is not invertible so there are more solutions than $\vec{\mathbf{0}}$. However, recall that if $\mathbf{u}$ satisfies the uniformity condition, then $\text{Per}(\mathbf{u})=3$. As a result, if $3|d$, then $D^d(\mathbf{u})=\mathbf{u}$. Since $3|120$, all tuples satisfying the uniformity condition are a solution to this system of equations. We now want to confirm that the only solutions to this system of equations are $\vec{\mathbf{0}}$ and tuples satisfying the uniformity condition.

 \indent To begin, we make the assumption that $x_1=1$ and rewrite our system of equations, with the goal now being to solve 
\[\begin{bmatrix}
6 & 6 & 6 & 6 \\
4 & 6 & 6 & 6 \\
6 & 4 & 6 & 6 \\
6 & 6 & 4 & 6 

\end{bmatrix}
\begin{bmatrix}
x_2\\
x_3\\
x_4\\
x_5
\end{bmatrix}
=\begin{bmatrix}
-4\\
-6\\
-6\\
-6
\end{bmatrix}.
\]
Since $det(B)=48$, $B$ is invertible. So there is only one solution to the system of equations when $x_1=1$, which is $(1,1,...,1)$.
We can repeat this process when $x_1=j$ for $2 \leq j \leq 6$ and find that $x_i=j$ for all $1 \leq i \leq 5$. In other words, only $\vec{\mathbf{0}}$ and those tuples satisfying the uniformity condition, which we already know have period $3$, are solutions to this system of equations.
Therefore, $120$ is not a period for any tuple, as well as $2, 4, 5, 6, 8, 10, 12, 15, 20, 24, 30, 40, 60$.

\indent Using these methods, we can eliminate all of the periods except for $80$. The $a_{r,s}$ coefficients in this case are

\begin{center}
\begin{tabular}{|c|c|c|c|c|c|}
\hline
$s$ & 1 & 2 & 3 & 4 & 5 \\
\hline
$a_{80,s}$ & 3 & 2 & 2 & 2 & 2 \\
\hline
\end{tabular}
\end{center}

When setting up the system of equations, each equation becomes 
\[2(x_1+ x_2+ x_3+ x_4+ x_5) \equiv 0 \; \text{mod} \; 7.\]
Therefore, if $x_1+x_2+ x_3+x_4+x_5 \equiv 0 \; \text{mod} \; 7$, then 
\[D^{80}(x_1, x_2, x_3, x_4, x_5)=(x_1, x_2, x_3, x_4, x_5).\]
Since we eliminated the other divisors of $80$ as periods, $\text{Per}(x_1, x_2, x_3, x_4, x_5)=80$. Therefore, all tuples satisfying the sum condition have period $80$.  

\indent These methods were used to create Figures \ref{Table1} and \ref{Table2}. 

\indent  Finally, we provide a few examples of $n,m$ prime that fall into neither of the cases presented in Figures \ref{Table1} or \ref{Table2}, which we determined using the above methods. Namely, we found divisors of $P_m(n)$ that had solutions to their system of equations that were not $\vec{\mathbf{0}}$, did not satisfy the uniformity condition, and did not satisfy the sum condition.

\indent Figure \ref{Table3} provides these examples, as well as the periods that came up as exceptions.

\begin{figure}
\centering
\begin{tabular}{|c|c|c|c|c|}
\hline
$n$ & $m$ & $P_m(n)$ & $x_i \neq 0$ all equal & Exceptions\\
\hline
5 & 11 & 10 & 10 & 2,5\\
  & 19 & 90 & 18 & 45\\
  & 31 & 30 & 5 & 3, 15\\
 \hline
7 & 13 & 84 & 12 & 28\\
\hline
11 & 23 & 22 & 11 & 11\\
\hline 
13 & 5 & 312 & 4 & 156\\
   & 23 & 158158 & 11 & 11297\\
\hline
23 & 3 & 177146 & 2 & 88573\\
\hline
\end{tabular}
\caption{$n,m$ prime, Exceptions}\label{Table3}
\end{figure}

\indent Out of these exceptions, we are able to look more in depth to the cases when $n=5$ and $m=11,19$ and when $n=7$ and $m=13$, as $\mathbb{Z}_m^n$ is small enough for MATLAB to  provide a file with all of the tuples in $\mathbb{Z}_m^n$ and their periods in a reasonable amount of time. For all of these cases, all tuples that generate a smaller period satisfy the sum condition. However, there were tuples that satisfied the sum condition that generated the maximum period.

\indent We now look more into the case where $n=5$, $m=11$. Suppose that you take a tuple in $\mathbb{Z}_{11}^5$ with period $5$ and you allow $S_5$ to act on that tuple such that for $\phi \in S_5$, 
\[\phi \circ (x_1, x_2, ..., x_5)=(x_{\phi(1)}, x_{\phi(2)}, ..., x_{\phi(5)}).\]
Let $J$ be the set of all $\phi \in S_5$ such that if $\text{Per}(x_1, x_2, ..., x_5)=5$, then 
\[\text{Per}(x_{\phi(1)}, x_{\phi(2)}, ..., x_{\phi(5)})=5.\]
By using MATLAB to find $J$, we determined that $J \cong D_{10}$. This can similarly be shown for when $n=5$, $m=19$ and $n=7$, $m=13$ and the period that appeared as exceptions for each case: Let $d$ be the smaller exception period for $\mathbb{Z}_m^n$ where either $n=5$ and $m=19$, or $n=7$ and $m=13$. Let $J$ to be the set of all $\phi \in S_n$ such that if $\text{Per}(x_1, x_2, ..., x_n)=d$, then $\text{Per}(x_{\phi(1)}, x_{\phi(2)}, ..., x_{\phi(5)})=d$. Then $J \cong D_{2n}$.

\indent This is not the case for the tuples in $\mathbf{u} \in \mathbb{Z}_{11}^5$ such that $\text{Per}(\mathbf{u})=2$. All of these tuples satisfy
\[\mathbf{u}=z*(1,9,4,3,5)\]
where $z \in \mathbb{Z}_m$, $z \neq 0$.

\indent For the cases where $n,m$ are prime that we have investigated, we found one case where $n,m$ are prime and every tuple that satisfies the sum condition generates a smaller period than the maximum and that also has other possible periods. This is when $n=7$ and $m=11$. Here, $P_7(11)=1330$ and tuples that satisfy the uniformity condition have period $10$. There are $1330$ tuples with period $19$, all of which satisfy the sum condition. The rest of the tuples satisfying the sum condition have period $133$. There are $13300$ tuples with period $190$, none of which satisfy the sum condition. Let $J_1$ be the set of all $\phi \in S_7$ such that if $\text{Per}(x_1, x_2, ..., x_7)=19$ then $\text{Per}(x_{\phi(1)}, x_{\phi(2)}, ..., x_{\phi(7)})=19$. Let $J_2$ be the set of all $\phi \in S_7$ such that if $\text{Per}(x_1, x_2, ..., x_7)=190$ then $\text{Per}(x_{\phi(1)}, x_{\phi(2)}, ..., x_{\phi(7)})=190$. Then we find that $J_1=J_2 $ and they are isomorphic to the Frobenius group of order 21.

\section{Periods when $n=m$ is Prime}
We draw our attention back to Figure \ref{Table1} to note that all of the cases when $n=m$ for $n$ prime, $5 \leq n \leq 19$ fall into the case where all tuples except $\vec{\mathbf{0}}$ and those that satisfy the uniformity condition generate the maximum period. This leads us to Theorem \ref{BigThm_n_not_3}. Before we prove it, we prove a lemma.

\begin{lemma}\label{j_by_j_matrix}
    Consider an $j \times j $ matrix that follows this pattern:
    \[B=
    \begin{bmatrix}
    -1& 1& -1& 1 &\cdots & \pm 1\\
    0 & -1 & 1 &-1 & \cdots & \mp 1\\
    1 & 0 & -1 & 1 & \cdots & \pm 1\\
    \vdots & & & & \ddots & \vdots\\
    \mp 1 & \pm 1 & \mp 1 & \pm 1 & \cdots & -1
    \end{bmatrix}
    ,\]
    where $\pm$ is positive if $j$ is even and $\mp$ is positive if $j$ is odd. Then 
    \[det(B)= 
    \begin{cases}
       1 & j \; \text{is even}\\
       -1 & j \; \text{is odd}
    \end{cases}.\]
\end{lemma}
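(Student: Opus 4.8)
The plan is to first pin down the entries of $B$ precisely from the displayed pattern, and then collapse the matrix to a nearly triangular form by a single sweep of row operations whose effect is easy to control.

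Reading off the pattern, the entry in row $i$, column $k$ is $(-1)^{k-i+1}$ when $k \ge i$ (so the diagonal is $-1$ and the band above it alternates in sign), the entry immediately left of the diagonal $B_{i,i-1}$ is $0$, and $B_{i,k}=(-1)^{k-i}$ for $k \le i-2$. The crucial observation is that adding two consecutive rows causes the long alternating tails to cancel: writing $R_i$ for the $i$-th row and $e_1,\dots,e_j$ for the standard basis row vectors, one checks that
\[R_i + R_{i+1} = e_{i-1}-e_i \qquad (1 \le i \le j-1),\]
with the convention $e_0=0$, so that $R_1+R_2=-e_1$. Indeed, for columns $k \ge i+1$ the two rows contribute opposite signs and cancel, for $k \le i-2$ they again cancel, and only the two columns $k=i-1$ and $k=i$ survive. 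I expect verifying this identity cleanly — including the boundary cases $i=1$ and $i=j-1$, and keeping track of the below-diagonal entries that the displayed matrix hides inside its ellipses — to be the main bookkeeping obstacle, but it is entirely mechanical once the entry formula is fixed.

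Next I would replace $R_i$ by $R_i+R_{i+1}$ for each $i=1,\dots,j-1$ in increasing order, leaving $R_j$ untouched. Each such step is an elementary row operation (adding one row to another), so the determinant is unchanged; equivalently, the transformation is left multiplication by a unit upper-bidiagonal matrix of determinant $1$. The resulting matrix $B'$ has rows $-e_1,\,e_1-e_2,\,\dots,\,e_{j-2}-e_{j-1}$ in positions $1,\dots,j-1$, and the original last row $R_j$ in position $j$. In particular every entry of column $j$ vanishes except the bottom one, which equals $-1$.

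Finally I would expand $\det(B')$ along column $j$. The only nonzero contribution is the entry $-1$ in position $(j,j)$ times its cofactor, whose sign is $(-1)^{j+j}=+1$; thus $\det(B')$ equals $-1$ times the minor obtained by deleting row $j$ and column $j$. That minor is the $(j-1)\times(j-1)$ matrix with rows $-e_1,\,e_1-e_2,\,\dots,\,e_{j-2}-e_{j-1}$, which is lower triangular with every diagonal entry equal to $-1$, so its determinant is $(-1)^{j-1}$. Hence
\[\det(B)=\det(B')=(-1)\cdot(-1)^{j-1}=(-1)^j,\]
which is $1$ for $j$ even and $-1$ for $j$ odd, as claimed. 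A clean alternative would be induction on $j$ via cofactor expansion, but the below-diagonal alternation makes the relevant submatrices less obviously self-similar, so the row-collapsing argument above seems more transparent.
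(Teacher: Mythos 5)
Your proof is correct, and it takes a genuinely different route from the paper's. The paper argues by induction on $j$, expanding the determinant down the first column: the $(1,1)$ minor is the size-$(j-1)$ matrix of the same pattern (this self-similarity is the inductive step), the $(2,1)$ entry is $0$, and every minor $M_{i,1}$ with $i \ge 3$ vanishes because deleting the first column makes the second row equal to $-1$ times the first; hence $\det(B) = -M_{1,1} = -(-1)^{j-1} = (-1)^j$. You replace the induction entirely with one sweep of determinant-preserving operations $R_i \mapsto R_i + R_{i+1}$, $i = 1, \dots, j-1$, built on the telescoping identity $R_i + R_{i+1} = e_{i-1} - e_i$ (with $e_0 = 0$), which collapses $B$ to a matrix whose last column is zero except for the $-1$ at $(j,j)$ and whose leading $(j-1)\times(j-1)$ block is lower bidiagonal with diagonal $-1$; expanding along the last column gives $\det(B) = (-1)\cdot(-1)^{j-1} = (-1)^j$ in a single step. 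I checked your entry formula ($B_{i,k} = (-1)^{k-i+1}$ for $k \ge i$, $B_{i,i-1}=0$, $B_{i,k} = (-1)^{k-i}$ for $k \le i-2$) against the displayed rows and against the matrix arising in the proof of Theorem \ref{BigThm_n_not_3}, and I verified the row-sum identity in all four column ranges, including the boundary cases $i=1$ and $k=i-1$; it all holds. Both arguments ultimately exploit the same structural features — the zero subdiagonal and the sign-alternating bands above and below it — and both must resolve the ellipses in the stated pattern; your approach makes that resolution explicit and yields a non-inductive, one-pass computation (equivalently, a factorization $B' = UB$ with $\det U = 1$), while the paper's induction trades that explicitness for brevity, never needing a closed-form entry description at all.
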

\begin{proof}
    We prove this via induction on $j$, with $j=2$ and $j=3$ as our basis cases.
    
   \indent \textbf{Inductive case:} Assume that the hypothesis is true for $j'<j$. Notice that the matrix made by removing the first row and first column is the matrix that follows our pattern for $j-1$. By induction, the determinant of this $(j-1) \times (j-1)$ matrix is $-1$ if $j$ is even and $1$ if $j$ is odd. 
    
\indent Also, if we remove the first column and $i$th row for $3 \leq i \leq j$, then the top two remaining rows are
    \[
    \begin{bmatrix}
        1& -1& 1 &\cdots & \pm 1\\
    -1 & 1 &-1 & \cdots & \mp 1
    \end{bmatrix}
    .\]
    Notice that the second row is the first row multiplied by $-1$. Therefore, the determinant of any matrix made by this method has determinant $0$.
    
  \indent If we put this altogether, we get
    \[det(B)=-1*M_{1,1}-0*M_{2,1}+1*M_{3,1} + \cdots +1*M_{j,1}\]
    \[=\begin{cases}
       1 & j \; \text{is even}\\
       -1 & j \; \text{is odd}
    \end{cases},\]
where $M_{i,j}$ is the determinant of the minor of row $i$, column $j$.   
 
   \indent From here, the lemma follows.
\end{proof}

\indent We can now prove Theorem \ref{whensmallperiod_n_not3}.
\begin{proof}[Proof of Theorem \ref{BigThm_n_not_3}]
Let $\delta$ be the multiplicative order of $2 \; \text{mod} \; m$. Let $n=m=p$ be prime.

    \indent $\mathbf{(1)}$: Only $\vec{\mathbf{0}}$ can satisfy $\text{Per}(\mathbf{u})=1$ for $n=m=p$ odd prime because $L_p(p)=0$ by Theorem 2 of \cite{Paper3}, meaning $\vec{\mathbf{0}}$ only has one predecessor, which is itself.
    
   \indent $\mathbf{(2)}$: By Lemma \ref{uniformcondlemma}, tuples satisfying the uniformity condition have period $\delta$. 
   
   \indent $\mathbf{(3)}$: First, we aim to find $P_p(p)$. Note that because of Corollary 6 in \cite{Paper1} $a_{p,1}=2$ and 
   \[a_{p,s} = \binom{p}{s-1},\] 
  so $a_{p,s} \equiv 0 \; \text{mod} \; p$ for $s \neq 1$. This means that $D^p(\mathbf{u})=2(\mathbf{u})$ for every $\mathbf{u} \in \mathbb{Z}_p^p$. Therefore, the smallest $\alpha$ such that $D^{\alpha}(0,0,...,0,1)=(0,0,...,0,1)$ is when $\alpha=p*\delta$ and $P_p(p)=p*\delta$.
   
   \indent We now finally aim to prove that if $\mathbf{u} \in \mathbb{Z}_p^p$ does not satisfy the uniformity condition and is not $\vec{\mathbf{0}}$, then $\text{Per}(\mathbf{u})=P_p(p)$. The possible period lengths for any tuple in $\mathbf{u}$ divide $P_p(p)$ and therefore are divisors of $\delta$ or divisors of $\delta$ times $p$ since $p$ is prime. Note that we cannot have a tuple with a period that is a multiple of $p$ and smaller than $\delta*p$, since this would mean that there exists $j<\delta$ such that $2^j\mathbf{u}= \mathbf{u}$ and therefore $2^j \equiv 1 \; \text{mod} \; p$, a contradiction. Therefore, we only need to consider period lengths that are divisors of $\delta$. If $\mathbf{u} \in \mathbb{Z}_p^p$ and $\text{Per}(\mathbf{u})=d$ for some $d |\delta$, then $D^{\delta}(\mathbf{u})=\mathbf{u}$. 
   Also, $\delta| \phi(p)=p-1$, so if $\text{Per}(\mathbf{u})|\delta$, then $D^{p-1}(\mathbf{u})=\mathbf{u}$.
   Therefore, we only need to prove that if $D^{p-1}(\mathbf{u})=\mathbf{u}$, then $\mathbf{u}=\vec{\mathbf{0}}$ or $\mathbf{u}$ satisfies the uniformity condition. 
   
   \indent Note
    \[a_{p-1,s} \equiv 
    \begin{cases}
        1 \; \text{mod} \; p & s \; \text{is odd}\\
        -1 \; \text{mod} \; p & s \; \text{is even}
    \end{cases}\]
    because $a_{p-1,s}=\displaystyle{\binom{p-1}{s-1}}$ and because of the well-known fact that 
    \[\binom{p-1}{s-1} \equiv (-1)^{s-1}\; \text{mod} \; p,\]
     a proof of which is in Lemma 4 of \cite{Paper4}.
    Assume for some $(x_1, x_2, ..., x_p) \in \mathbb{Z}_p^p$, $D^{p-1}(x_1, x_2, ..., x_p)=(x_1, x_2, ..., x_p)$. Then 
    \[x_1 -x_2 +x_3 - \cdots +x_p \equiv x_1 \; \text{mod} \; p\]
    \[x_2-x_3+x_4- \cdots +x_1 \equiv x_2 \; \text{mod} \; p\]
    \[\vdots\]
    \[x_p-x_1+ x_2- \cdots + x_{p-1} \; \text{mod} \; p.\]
    We can then use the following matrices to solve the system of equations:
    \[
    \begin{bmatrix}
        0 & -1 &1 & \cdots & -1 & 1\\
        1 & 0 & -1 & \cdots & 1 & -1\\
        \vdots & & & \ddots & & \vdots\\
        -1 & 1 & -1 & \cdots & 1 & 0
    \end{bmatrix}    
    \begin{bmatrix}
        x_1\\
        x_2\\
        \vdots\\
        x_p
    \end{bmatrix}    
    =
    \begin{bmatrix}
        0\\
        0\\
        \vdots\\
        0
    \end{bmatrix}
    .\]
    Since $(x,x,...,x)$ satisfies this system of equations for every $x \in \mathbb{Z}_p$, the matrix 
    \[
    \begin{bmatrix}
        0 & -1 &1 & \cdots & -1 & 1\\
        1 & 0 & -1 & \cdots & 1 & -1\\
        \vdots & & & \ddots & & \vdots\\
        -1 & 1 & -1 & \cdots & 1 & 0
    \end{bmatrix}
    \]
    must be singular. We will now fix $x_1=1$ and remove the last equation from the system to make a new system of equations with matrices
    \[
    \begin{bmatrix}
         -1 &1 & \cdots & -1 & 1\\
         0 & -1 & \cdots & 1 & -1\\
        \vdots & &  \ddots & & \vdots\\
        1 & -1  & \cdots & 0 & -1
    \end{bmatrix}   
    \begin{bmatrix}
        x_2\\
        x_3\\
        \vdots\\
        x_p
    \end{bmatrix}     
    =
    \begin{bmatrix}
        0\\
        -1\\
        \vdots\\
        1
    \end{bmatrix}.
    \]
   \indent We already know that $(1,1,...,1)$ is a solution that satisfies this system. If our new matrix 
    \[B=
    \begin{bmatrix}
         -1 &1 & \cdots & -1 & 1\\
         0 & -1 & \cdots & 1 & -1\\
        \vdots & & \ddots & & \vdots\\
        1 & -1  & \cdots & 0 & -1
    \end{bmatrix}
    \]
    is nonsingular, then that means $(1,1,...,1)$ is the only solution when $x_1=1$ is fixed. We can repeat these steps for $x_1=x$ for any $x \in \mathbb{Z}_p$ and reach the conclusion that the only solutions to our original system of equations are the tuples that satisfy the uniformity condition and $\vec{\mathbf{0}}$. Our theorem would then follow from this.
    
   \indent Notice that $B$ here follows the pattern of the matrix described in Lemma \ref{j_by_j_matrix}. Therefore, by this lemma,  $det(B)$ is nonzero in $\mathbb{Z}_p$, and therefore nonsingular. From here, our theorem follows. 
    
\end{proof}

\end{document}